\definecolor{azul}{RGB}{33,64,154}
\def \suchthat {~\big{|} ~}
\newtheorem{theorem}{Theorem}[section] 
\newtheorem{lemma}{Lemma}[section]
\newtheorem{proposition}{Proposition}[section]  
\newtheorem{corollary}{Corollary}[section] 
\newtheorem{definition}{Definition}[section]
\newtheorem{remark}{Remark}[section]
\numberwithin{equation}{section}
\newcommand{\Lip} {\operatorname{Lip }}
\title[Diffusion models involving gradient activation]{Regularity in diffusion models with gradient activation}
\author[D.J. Ara\'ujo]{Dami\~ao J. Ara\'ujo}
\address{UFPB, Department of Mathematics, Universidade Federal da Para\'iba, 58059-900, Jo\~ao Pessoa-PB, Brazil}{}
\email{araujo@mat.ufpb.br}
\author[A.O. Sobral]{Aelson O. Sobral}
\address{UFPB, Department of Mathematics, Universidade Federal da Para\'iba, 58059-900, Jo\~ao Pessoa-PB, Brazil}{}
\email{aelson.sobral@academico.ufpb.br }
\author[E.V. Teixeira]{Eduardo V. Teixeira}
\address{Department of Mathematics, University of Central Florida, 4393 Andromeda Loop N. Orlando, FL, USA 32816}{}
\email{eduardo.teixeira@ucf.edu}
\begin{document}
	
	\subjclass[2020]{35B65, 35J60} 
	
	\keywords{regularity theory, fully nonlinear elliptic equations, unconstrained free boundary problems.}
	
\begin{abstract} 
We prove sharp regularity estimates for solutions of highly degenerate fully nonlinear elliptic equations. These are free boundary models in which a nonlinear diffusion process drives the system only in the region where the gradient surpasses a given threshold. Our main result concerns the existence of a universal modulus of continuity for $Du$, {\it up to the free boundary}. Gradient bounds with respect to the $L^\infty$ norm are proven to be uniform with respect to the degree of degeneracy. Several new ingredients are needed and among the tools introduced in this paper is an improvement of regularity lemma designed to measure the oscillation decay with respect to the gradient level-set distance. Applications of the methods are discussed at the end of the paper.
\end{abstract}   
	
\date{\today}
	
\maketitle
	
\tableofcontents

\section{Introduction}\label{sct intro}

In this paper, we investigate diffusion models that are triggered by a gradient threshold. These are self-regulatory systems in which a diffusive agent is prompted whenever the density difference becomes much larger than the displacement. Mathematically, this leads to the analysis of a class of \textit{highly} degenerate elliptic partial differential equations of the form
\begin{equation}\label{Eq-General}
	\mathcal{H}(Du,D^2 u) = f,
\end{equation}
 where the operator $\mathcal{H}\colon  \mathbb{R}^n \times \text{Sym}(n) \to \mathbb{R}$ collapses in a subset $\mathcal{C} \subset \mathbb{R}^n$, corresponding to the gradient argument, i.e. $\mathcal{H}(\xi,M)\equiv 0$, for all $\xi \in \mathcal{C}$, and $M \mapsto \mathcal{H}(\xi,M)$ is elliptic for $\xi \in \mathbb{R}^n \setminus \mathcal{C}$.
 
 Problems of that nature appear, for instance, in the theory of superconductivity, when examining vortices in the mean-field model, e.g. \cite{C95,BR99,ESS98} and \cite{CSS04}. Variational interpretations are related to minimization issues in random surfaces and tilings, see \cite{random, tiling} and \cite{DS} for such a connection, as well as to problems in congested traffic dynamics, see \cite{BCS10} as well as \cite{CF14, BD23, SV10}. Fully nonlinear equations of this type also appear as limiting free boundary problems, obtained when the degeneracy parameter of the equation tends to infinity---a free boundary version of the infinity Laplacian operator if you will--- see subsection \ref{subsec limiting FB} for further details.
 
 Note the region where the system is governed by a PDE depends upon the solution itself, more precisely on its gradient. That is, the correct way to interpret \eqref{Eq-General} is as an (unconstrained) free boundary problem, viz.
 \begin{equation}\label{Eq-General-FBP}
	\mathcal{H}(Du,D^2 u) = f, \quad \text{ in } \left\{ x \in \Omega \suchthat  Du(x) \in \mathbb{R}^n \setminus \mathcal{C} \right\}.
\end{equation}
We will further discuss this point of view in subsection \ref{subsct FB}.

To simplify the presentation, we focus on the case $\mathcal{C}=B_\kappa$, for $\kappa\ge 0$, leading to the free boundary problem
\begin{equation}\label{maineq}
	(|Du|-\kappa)_+^q F(D^2 u) = f \quad \mbox{in } \; \{|Du| > \kappa \}.
\end{equation}
The operator $F$ is uniformly elliptic and the parameter, $q \geq 0$, prescribes the degeneracy degree of the model along the free boundary $\partial \{|Du| > \kappa \}$. It is worth noting that the problem is still (very) degenerate even if $q=0$, due to the diffusion collapse in the (a priori unknown) region $\{|Du| \le \kappa \}$. 

It is also important to highlight that no information upon the sets $\{|Du| \le \kappa \}$ and $\{|Du| > \kappa \}$ can be a priori inferred. In particular, the free boundary, $\partial \{|Du| > \kappa\}$ can be very irregular, and thus out of the scope of known elliptic boundary regularity estimates.

The case $\kappa = q = 0$ falls into the theory launched  by \cite{CS02}, where the authors investigated fully nonlinear elliptic equations of the form 
\begin{equation}\label{Eq-Caff-S}
    F(D^2u) = g(x,u)\chi_{\{|Du| \not = 0 \}}.
\end{equation}
Solutions of \eqref{Eq-Caff-S} are understood in a very weak viscosity sense, where one disregards smooth test functions that touches with zero gradient. In \cite{CS02}, the authors manage to show that solutions of \eqref{Eq-Caff-S} satisfy (ordinary) viscosity inequalities, and thus the classical fully nonlinear regularity theory applies.  In the case $F = \Delta$ and $g(x,u) = cu$, the authors obtain the sharp $C^{1,1}$-regularity of solutions to \eqref{Eq-Caff-S}; see also \cite{CKS} for related advances on similar problems.

In parallel to the approach adopted in \cite{CS02}, in this paper we introduce the concept of $\kappa$-grad viscosity solutions of \eqref{maineq}, see Definition \ref{def-kappa-grad}. The idea is to interpret \eqref{maineq} by disregarding test functions touching $u$ at point $x_0$ with not sufficient large slope. That is, the corresponding viscosity inequalities are enforced only at points $x_0$ for which one can touch by a smooth test function $\varphi$ verifying $|D\varphi(x_0)| > \kappa$. 

Clearly, when $\kappa > 0$, the optimal (local) regularity one can hope for a solution of \eqref{maineq} is Lipschitz continuity. This is because any function whose gradient norm is less than $\kappa$ automatically satisfies the equation.  Also, one can easily construct 1D-examples of solutions of \eqref{maineq} that are merely Lipschitz continuous. On the other hand,  $\kappa$-grad viscosity solutions of \eqref{maineq} are entitled to the regularity theory developed in \cite{IS16}. In particular, solutions are locally of class $C^{0,\alpha}$, for some $0< \alpha \ll 1$,  depending on dimension, ellipticity constants, and $\kappa$. 

The first main result of this paper is the sharp Lipschitz regularity estimate for  $\kappa$-grad viscosity solutions of \eqref{maineq}, see Theorem \ref{LipTHM}. The proof relies on carefully crafting special jets, as in \cite{CIL92}, whose gradient at touching points is sufficiently large. We perform a meticulous analysis, identifying all possible dependencies along the process. In particular, we prove that the Lipschitz norm of solutions of \eqref{maineq} does not depend upon the degree of degeneracy, $q$. We mention that this remark is new (and sharp) even in the case that the PDE holds everywhere, say for the family of PDEs:
\begin{equation}\label{modelq}
    |Du|^q F(D^2 u) = f, \quad \text{ in } B_1 \subset \mathbb{R}^n,
\end{equation} 
with $c< f< c^{-1}$. Indeed, a result proven in \cite{ART15}, see also \cite{IS13} and \cite{APPT}, assures that viscosity solutions of \eqref{modelq} are locally of class $C^{1,\frac{1}{1+q}}$ (at least for $q\gg 1$) and that such a regularity is optimal. Hence, insofar as uniform-in-$q$ estimates are concerned, gradient bounds are the best one can hope for solutions $u_q$ of \eqref{modelq}.

While Lipschitz estimates are indeed optimal in regards to local regularity of solutions to \eqref{maineq}, one could inquire about $C^1$ regularity within the PDE region, viz. $\Omega_u \coloneqq \{|Du| > \kappa \}$, up the free boundary,
$$
    \Gamma_u \coloneqq \partial \{|Du| > \kappa \}.
$$
This problem is particularly challenging, as it seems hard to say anything about the structure of $\Gamma_u$, unless further information is given; see \cite{CSS04} for the case $q=\kappa = 0$ and $F = \Delta$. 

It is worth noting that, continuity of $(|D u|-\kappa)_+$ implies $\Omega_u$ must be an open set, and that the PDE $(|Du|-\kappa)_+^q F(D^2 u) = f$ holds in the traditional viscosity sense within $\Omega_u$. 

The considerations above give rise to a slightly stronger, though necessary, notion of solutions to \eqref{maineq}, see Definition \ref{def-effective}. Under such a regime, the second main result we prove in this paper yields a universal modulus of continuity of the gradient of $u$ in $\Omega_u$, up to the free boundary, $\Gamma_u$, see Theorem \ref{C1THM}. The proof combines several ingredients and it will be delivered in Section \ref{sct-C1}. The idea relies on an interplay between interior $C^{1,\alpha_d}$ regularity estimates at points that are $d$-away (with respect to the gradient level-set distance) from the free-boundary, $\Gamma_u$, and how $0<\alpha_d \ll 1$ deteriorates as $d\to 0$. This is attained by introducing a sort of DeGiorgi's improvement of oscillation technique at the gradient-level. This is particularly useful to gauge regularity for points sufficiently close to the free boundary, with respect to the gradient level-sets. For points far from the free boundary (again with respect to the gradient level-set distance), the equation is elliptic, and thus, up to rescaling, $u$ is close to a $F$-harmonic function; uniform $C^{1,\alpha}$ regularity estimates are then obtained {\it ala} Caffarelli, \cite{C89}; see also \cite{Teix20} for a didatical account of this method.

\begin{figure}[h!]
\centering
\begin{tikzpicture}[scale=1.4]

\draw (-3,3) -- (-3,0) -- (3,0) -- (3,3) -- (-3,3);

\filldraw [black] (-0.6,0.7) circle (0.0pt) node[below left, gray] {$|Du| \sim \kappa^+$};
\filldraw [black] (-0.7,2.3) circle (0.0pt) node[below left, black] {$|Du| \leq \kappa$};
\filldraw [black] (2.5,2.3) circle (0.0pt) node[below left, gray] {$|Du| > \kappa^+$};
\filldraw [black] (2.75,1.2) circle (0.0pt) node[below left, black] {$(|Du|-\kappa)^qF(D^2 u)=f$};

\filldraw [gray, opacity=0.7] (-3,3) -- (-3,0) -- (-2.4,0) to [out=80,in=0] (-2.5,0.7) to [out=45,in=180] (-1,1)  to [out=30,in=-30] (0.2,2.5) -- (-0.1,2.3) -- (0.2,3) -- (-3,3);

\filldraw [gray, opacity=0.4] (-3,3) -- (-3,0) -- (-1.0,0) -- (0,0.5) -- (-0.5,1) to [out=60,in=180] (0.5,1.5)  to [out=90,in=180] (2.5,3) -- (-3,3);

\draw [black, opacity=0.7, very thick] (-2.4,0) to [out=80,in=0] (-2.5,0.7) to [out=45,in=180] (-1,1)  to [out=30,in=-30] (0.2,2.5) -- (-0.1,2.3) -- (0.2,3) -- (3,3) -- (3,0) -- (-2.4,0);
\end{tikzpicture}
\caption{This figure is a representation the geometry of the problem. The white region, $\{ |Du| > \kappa\}$, displays the part of the domain in which a diffusion PDE drives the system. In the dark grey zone,  $\{ |Du| \le  \kappa\}$, the system is dormant. The analysis in the intermediary light grey sector, $\{\kappa < |Du| < \kappa + \mu \}$, for some $0< \mu \ll 1$, is critical for the proof of Theorem \ref{C1THM}. It is worth highlighting, however, that the topology of such a regions can be much more complicated and their corresponding boundaries highly irregulars. This is why Theorem \ref{C1THM} is a non-trivial (somewhat striking) result.}
\label{fig2}
\end{figure}
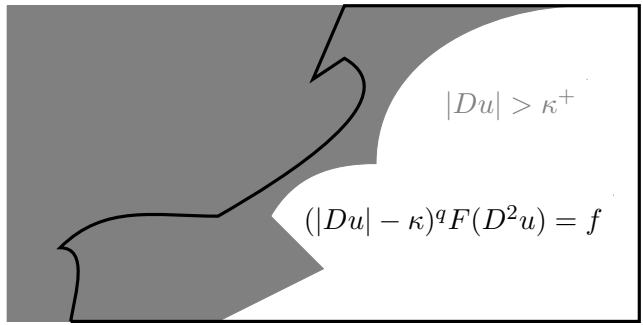

The rest of this paper is organized as follows. In Section \ref{sctPrelim} we provide the basic setup and some important concepts to be used throughout the paper. In Section \ref{sct proof lip}, we prove the uniform Lipschitz estimate, Theorem \ref{LipTHM}. In Section \ref{sct-Compactness}, we establish compactness for the scaled PDE. In Section \ref{sct-C1}, we split the analysis between the region close and far away from the free boundary to attain the universal $C^1$ regularity theorem. In the last Section \ref{sct Applic}, we discuss further applications of the methods introduced in this paper.

\section{Preliminaries}\label{sctPrelim}

In this section, we gather some classical terminologies and introduce new concepts that will be developed throughout the paper.  

Problems are modeled in the $n$-dimensional Euclidean space, $\mathbb{R}^n$. The open ball of radius $r$ centered at the point $x_0$ is denoted by $B_r(x_0)$. We shall omit the center of the ball for $x_0 = 0$. 

The space of all $n\times n$ symmetric matrices is denoted by Sym$(n)$. Given constants $0 < \lambda \leq \Lambda$, let 
$$
    \mathcal{A}_{\lambda,\Lambda} \coloneqq \left\{A \in \mbox{Sym}(n) \suchthat \lambda I_n \leq A \leq \Lambda I_n \right\}.
$$
The so-called \textit{Pucci Extremal Operators} $\mathcal{M}^+$ and $\mathcal{M}^-$, acting on Sym$(n)$, are defined as
$$
    \mathcal{M}^+(M) \coloneqq \sup_{A \in \mathcal{A}_{\lambda,\Lambda}}\text{Trace}(AM) \quad \text{ and } \quad \mathcal{M}^-(M) \coloneqq \inf_{A \in \mathcal{A}_{\lambda,\Lambda}}\text{Trace}(AM).
$$
\begin{definition}\label{UE}
Given constants $0 < \lambda \leq \Lambda$, we say that $F\colon \mbox{Sym}(n) \rightarrow \mathbb{R}$ is $(\lambda,\Lambda)$-elliptic if
$$
    \mathcal{M}^-(M-N) \leq F(M) - F(N) \leq \mathcal{M}^+(M-N), 
$$
for every $M,N \in \mbox{Sym}(n)$.
\end{definition}

\medskip

Inspired by \cite{CS02}, we propose the following definition: 

\begin{definition}\label{def-kappa-grad}($\kappa$-grad viscosity solutions)
Let $G:\mathbb{R}^n \times Sym(n) \rightarrow \mathbb{R}$ be a continuous function. Given a nonnegative $\kappa$, we say that $u$ is a $\kappa$-grad viscosity subsolution to 
\begin{equation}\label{viscosity_eq}
    G(Du, D^2u) = f
\end{equation}
if for every $x_0$ and $\varphi$ such that $(u-\varphi)$ attain a local maximum at $x_0$ with $|D\varphi(x_0)|>\kappa$ there holds
$$
    G(D\varphi(x_0), D^2\varphi(x_0)) \geq f(x_0).
$$
We say $u$ is a $\kappa$-grad viscosity supersolution for \eqref{viscosity_eq}, if for every $x_0$ and $\varphi$ such that $(u-\varphi)$ attain a local minimum at $x_0$ with $|D\varphi(x_0)|>\kappa$ there holds
$$
    G(D\varphi(x_0),D^2\varphi(x_0)) \leq f(x_0).
$$
We say $u$ is a $\kappa$-grad viscosity solution for \eqref{viscosity_eq} if $u$ is both a $\kappa$-grad subsolution and supersolution.
\end{definition}

Similarly, we will say that a continuous $v\colon {B}_1 \rightarrow \mathbb{R}$ satisfies $|Dv|(x_0) > \kappa$ (in the viscosity sense) if there exists a $C^2$ function $\varphi$ touching $v$ from above (or below) at $x_0$ satisfying $|D\varphi(x_0)|>\kappa$.

\begin{definition}
Given a continuous function $v\colon \overline{B}_1 \rightarrow \mathbb{R}$ we define 
$$
    \Omega_v = \left\{x \in B_1 \suchthat |Dv|>\kappa \right\}.
$$
The interior boundary of this set, will be denoted by $\Gamma_v$, i.e. 
$$
    \Gamma_v \coloneqq \partial \Omega_v \cap B_1
$$
\end{definition}

For the PDE model we will investigate in this paper, $\Gamma_u$ will represent the free boundary of the problem, whereas $\Omega_u$ is the region in which the system is driven by a (fully nonlinear, degenerate) elliptic equation.

We note that the notion of $\kappa$-grad viscosity solutions is indeed very weak. It enlarges the set where we search for solutions by disregarding test functions whose slope at a touching point is less than or equal to $\kappa$. In particular, this definition gives very little information about the set $\Omega_u$, where the PDE is placed. If one seeks for further regularity of solutions to \eqref{maineq} within $\Omega_u$, a bit more structure is naturally required. This is the contents of the next definition:

\begin{definition}\label{def-effective} We say $u$ is an effective viscosity solution of \eqref{maineq}, if the set $\Omega_u \coloneqq \left\{ x\in B_1 \suchthat |Du| > \kappa \right\}$ is open and $u$ satisfies 
$$
    (|Du|-\kappa)_+^q F(D^2 u) = f \quad \text{ in } \Omega_u,
$$
in the classical viscosity sense.
\end{definition}

As a byproduct of the results to be proven in this paper, $\kappa$-grad viscosity solutions of \eqref{maineq} can be easily obtained through a limiting process. More precisely, let $u_j$ be a bounded family of viscosity solutions to
\begin{equation}\label{regmaineq}
	\left((|Du_j|-\kappa)_+^q +1/j \right)F(D^2 u_j) = f, \quad \text{ in } \Omega.
\end{equation}
The regularity estimates established in this paper are uniform with respect to the approximation parameter $j$. Hence, up to a subsequence, one can pass the limit as $j\rightarrow \infty$ in \eqref{regmaineq}. It is standard to verify that the limit function will enjoy the same regularity estimate of $u_j$, i.e. Lipschitz continuous, and it solves \eqref{maineq} in the $\kappa$-grad viscosity sense. The $C^1$ regularity of $u_j$, up to the free boundary, viz. the corresponding Theorem \ref{C1THM}, is too uniform with respect to the parameter $j$.

Next, we comment on the scaling properties of the model, which shall be used throughout the entire evolution of the paper. 

\begin{remark}\label{norm}
Let $u$ be a $\kappa$-grad (resp. effective) viscosity solution of \eqref{maineq} in $B_1$. Assume $\kappa > 0$ and define the constants:
$$
    A=\frac{1}{\max(1,\|u\|_\infty)} \quad \mbox{and} \quad B=\frac{\tau \cdot \max(1,\|u\|_\infty)}{\kappa},
$$
for an arbitrary $\tau>0$. In the sequel, define 
$$
    w(x) \coloneqq A u(Bx).
$$
One easily verifies that $w$ is a $\tau$-grad (resp. effective) viscosity solution of the re-scaled model:
$$
    (|Dw|-\tau)_+^q\overline{F}(D^2w) = \overline{f},
$$
in the ball $B_{1/B}$, where 
$$
    \overline{F}(M) = (AB^2) F( (AB^2)^{-1} M)
$$
and
$$
    \overline{f}(x) = A^{q+1}B^{q+2}f(Bx).
$$
Indeed, if $\varphi \in C^2$ touches $w$ from above(or below) at a point $x$ with $|D\varphi(x)|>\tau$, then the function $\overline{\varphi}(x) = A^{-1}\varphi(B^{-1}x)$ touches $u$ from above(or below) at $Bx$ with $|D\overline{\varphi}(Bx)|>\kappa$.
\end{remark}

In view of the previous remark, all results in this paper will be proven, with no loss of generality, for normalized solution, $-1\le u \le 1$. In Section \ref{sct-C1}, we will use this remark to restrict the analysis to the case when $0<\kappa$ is a universally small constant, to be chosen {\it a posteriori}. As pointed out in Remark \ref{norm}, this is not 
restrictive. 

\section{Uniform Lipschitz estimates}\label{sct proof lip}

This section discusses the proof of sharp Lipschitz regularity of $\kappa$-grad viscosity solution of \eqref{maineq}. The main result is the following:

\begin{theorem}\label{LipTHM}
Let $u$ be a $\kappa$-grad viscosity solution of \eqref{maineq} in $B_1$. Then $u$ is Lipschitz continuous in $B_{1/2}$, with universal bounds. More precisely, there exists a constant $C$ depending only on $n$, $\lambda$, $\Lambda$, $\kappa$, $\|f\|_\infty$ and $\|u\|_\infty$, but not on $q$, such that
$$
\sup_{x,y \in B_{1/2}}\frac{|u(x) - u(y)|}{|x-y|} \leq C.
$$
\end{theorem}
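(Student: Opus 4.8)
The plan is to prove the uniform Lipschitz estimate by a contradiction/compactness-free argument based on constructing explicit barriers (auxiliary test functions) of the form $x \mapsto u(y) + L|x-y| - \omega(|x-y|)$, or rather by the classical device of showing that
\[
\sup_{B_{1/2}} \big( u(x) - u(x_0) - L|x-x_0| \big) \le 0
\]
for a universal $L$ and every fixed $x_0 \in B_{1/2}$. First I would normalize via Remark \ref{norm} so that $\|u\|_\infty \le 1$; the point there is to keep careful track of how the constants transform, since the claim asserts $q$-independence. The core of the argument is to fix $x_0$, consider the function $\Phi(x,y) = u(x) - u(y) - L\,\phi(|x-y|) - M(|x-x_0|^2 + |y-x_0|^2)$ on $\overline{B}_{3/4}\times \overline{B}_{3/4}$ with $\phi$ a concave modulus (e.g. $\phi(t) = t - c t^{3/2}$ or $\phi(t)= \log(1+t)$ type) chosen so that near the diagonal it behaves like $t$ but is strictly concave, and $M$ a large constant forcing the maximum into the interior. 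If the maximum of $\Phi$ is positive, it is attained at an interior pair $(\bar x, \bar y)$ with $\bar x \ne \bar y$, and then the Theorem on Sums / Jensen–Ishii lemma (as in \cite{CIL92}) produces matrices $X \le Y$ (in the usual sense, with the standard quadratic bound) and the gradient vector $p = L\phi'(|\bar x - \bar y|)\frac{\bar x - \bar y}{|\bar x - \bar y|} + (\text{lower order from the }M\text{ term})$.

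The decisive structural point — and this is exactly where the $\kappa$-grad notion and the degeneracy $q$ come in — is that by choosing $L$ universally large (larger than $\kappa$ plus the contribution of the $M$-penalization, which I control by first fixing $M$ in terms of $n,\lambda,\Lambda,\kappa,\|f\|_\infty$ and then choosing $L$ larger still) we guarantee $|p| > \kappa$ at the touching point. Hence both the subsolution inequality at $\bar x$ and the supersolution inequality at $\bar y$ are \emph{activated}: we genuinely get
\[
(|p| - \kappa)_+^q F(X) \ge f(\bar x) \quad\text{and}\quad (|p'| - \kappa)_+^q F(Y) \le f(\bar y),
\]
with $|p|,|p'|$ both comparable to $L$. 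Subtracting, using $X \le Y$ and uniform ellipticity of $F$ to get $F(X) - F(Y) \le \mathcal{M}^+(X-Y) \le 0$ (this is where the matrix inequality from the jets is used), and dividing by the common positive factor $(|p|-\kappa)_+^q \sim (L-\kappa)^q$, one obtains a contradiction with $f(\bar x) - f(\bar y)$ being of size $\lesssim \|f\|_\infty$ provided $L$ is chosen large enough in terms of the concavity of $\phi$, the second-order term bounds, and $\|f\|_\infty$ — crucially, the factor $(L-\kappa)^q$ can only \emph{help}, since for $L > \kappa + 1$ it is $\ge 1$, so no $q$-dependence sneaks into the final constant. One must handle the case $q=0$ (and $|p|-\kappa$ possibly small) by the remark that $\phi$ concave forces $F(X)-F(Y)$ to be \emph{strictly} negative with a quantitative gap coming from the concavity of $\phi$, independent of $q$; this is the usual trick that makes the barrier work without needing the degeneracy factor to be bounded below.

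The main obstacle I anticipate is the bookkeeping around the activation threshold: one must be sure that after introducing the penalization term $M(|x-x_0|^2 + |y-x_0|^2)$ needed to localize the maximum away from $\partial B_{3/4}$, the gradient at the touching point is still provably $>\kappa$ — i.e. the chain of choices must be $M = M(n,\lambda,\Lambda,\kappa,\|f\|_\infty)$ first, then $L = L(n,\lambda,\Lambda,\kappa,\|f\|_\infty, M)$, with no circularity, and one checks $|L\phi'(|\bar x-\bar y|) - O(M)| > \kappa$. A secondary subtlety is that near $\bar x = \bar y$ the modulus $\phi$ and its derivatives degenerate, so one should pick $\phi$ so that $\phi'$ stays bounded below by, say, $1/2$ on the relevant range while $-\phi''$ has a definite sign and size; standard choices like $\phi(t) = t - \gamma t^{2}$ on a small interval, or $\phi(t)=\omega_0 \tanh(t/\omega_0)$, work, and I would verify the needed estimates there. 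Finally, once the diagonal estimate $|u(x)-u(x_0)| \le L|x-x_0|$ is established for all $x_0 \in B_{1/2}$ and $x$ in a fixed neighborhood, Lipschitz continuity in $B_{1/2}$ with the asserted universal, $q$-free constant follows, and undoing the normalization of Remark \ref{norm} yields the dependence on $\|u\|_\infty$ exactly as stated.
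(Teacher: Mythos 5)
Your proposal follows essentially the same route as the paper: doubling of variables with a concave modulus $\phi$ plus a quadratic localization term, Jensen--Ishii at the off-diagonal maximum, the observation that a large $L$ forces the touching slopes above $\kappa$ so both viscosity inequalities are activated, and the remark that $(L-\kappa)_+^{-q}\le 1$ once $L>\kappa+1$, which is exactly how the paper kills the $q$-dependence.

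There is, however, one step where your written logic does not close, and it is the step that actually produces the contradiction. You claim the jets satisfy $X\le Y$, hence $F(X)-F(Y)\le\mathcal{M}^+(X-Y)\le 0$, and that dividing the viscosity inequalities by $(L-\kappa)^q$ then yields a contradiction with $|f|\le\|f\|_\infty$. First, with the quadratic penalization one only gets $X\le Y+O(K)$, not $X\le Y$. Second, and more seriously, a \emph{nonpositive} upper bound on $\mathcal{M}^+(X-Y)$ gives no contradiction at all: the two viscosity inequalities only force $F(X)-F(Y)\ge -2\|f\|_\infty(L/4-\kappa)_+^{-q}\ge -2\|f\|_\infty$, which is perfectly compatible with $\mathcal{M}^+(X-Y)\le 0$. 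What is needed — and what the paper's Lemma 3.1 extracts by testing the block matrix inequality against the vector $(\hat\eta,-\hat\eta)$ with $\hat\eta=(\overline{x}-\overline{y})/|\overline{x}-\overline{y}|$ — is that $X-Y$ has an eigenvalue below $4L\phi''(|\overline{x}-\overline{y}|)+O(K)$, whence $\mathcal{M}^+(X-Y)\le 4n\Lambda K+4\lambda L\phi''(|\overline{x}-\overline{y}|)\le CK-cL$; it is this term, linear in $-L$, that contradicts the lower bound $-2\|f\|_\infty$ once $L$ is large. You do eventually invoke the ``quantitative gap coming from the concavity of $\phi$,'' but you present it as a patch for the case $q=0$; in fact it is the engine of the argument for every $q\ge0$, so your write-up should replace the inequality $\mathcal{M}^+(X-Y)\le 0$ by the quantitative one from the start. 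With that repair (and a choice of $\phi$ whose $-\phi''$ is bounded below on the relevant range, e.g. the paper's $\phi(t)=t-\tfrac23 t^{3/2}$ or your $t-\gamma t^2$), the argument is the paper's.
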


As commented, Theorem \ref{LipTHM} is optimal, even in the case $q=0$. It is also important to highlight that the Lipschitz bound does not depend on the degeneracy parameter, $q$. This is interesting (and new) even in the case when the PDE holds everywhere in the domain. We will further discuss this in Section \ref{sct Applic}. 

The first key Lemma in the proof of Theorem \ref{LipTHM} fosters useful bounds for barriers, to be crafted, at maximum points of the double-variable function $w(x,y) \coloneqq u(x) - u(y)$.

\begin{lemma}\label{jensen_ishii_lemma}
Let $u$ be a $\kappa$-grad viscosity solution of \eqref{maineq} and consider double-variable functions:
\begin{equation}\nonumber
    w(x,y) = u(x) - u(y) \quad \mbox{and} \quad \varphi(x,y) \coloneqq L\phi(|x-y|) + K(|x|^2 + |y|^2),
\end{equation}
for positive parameters $L,K$ and $\phi \in C^2(\mathbb{R}^+)$ a nonnegative function. Let $(\overline{x},\overline{y})$ be an interior maximum point for $w-\varphi$ such that $\overline{x} \neq \overline{y}$. Then, 
\begin{equation}\nonumber
\begin{array}{c}
    -4\phi''(|\overline{x}-\overline{y}|)L  \; \leq \\[0.25cm]
    \displaystyle 4n \frac{\Lambda}{\lambda}K + \frac{1}{\lambda}\|f\|_\infty \left[ \left(\left|D_x \varphi(\overline{x},\overline{y})\right|-\kappa \right)_+^{-q} +\left(\left|D_y \varphi(\overline{x},\overline{y})\right|-\kappa \right)_+^{-q} \right].
\end{array}
\end{equation}
\end{lemma}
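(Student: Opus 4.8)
\textbf{Proof strategy for Lemma \ref{jensen_ishii_lemma}.}

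The plan is to apply the Theorem on Sums (Jensen--Ishii's lemma, as in \cite{CIL92}) to the function $w(x,y) - \varphi(x,y)$ at its interior maximum point $(\overline x, \overline y)$. Since $\overline x \neq \overline y$, the function $\phi(|x-y|)$ is smooth near $(\overline x,\overline y)$, so $\varphi$ is genuinely $C^2$ there and we may compute $D^2\varphi(\overline x,\overline y)$ explicitly. Writing $p \coloneqq D_x\varphi(\overline x,\overline y) = L\phi'(|\overline x - \overline y|)\frac{\overline x-\overline y}{|\overline x-\overline y|} + 2K\overline x$ and $-\tilde p \coloneqq D_y\varphi(\overline x,\overline y)$, the Theorem on Sums produces symmetric matrices $X, Y \in \mathrm{Sym}(n)$ such that $(p, X) \in \overline{J}^{2,+}u(\overline x)$, $(\tilde p, Y) \in \overline{J}^{2,-}u(\overline y)$, and the block matrix inequality
\begin{equation}\nonumber
\begin{pmatrix} X & 0 \\ 0 & -Y \end{pmatrix} \leq D^2\varphi(\overline x,\overline y) + \varepsilon \big(D^2\varphi(\overline x,\overline y)\big)^2
\end{equation}
holds; absorbing the $\varepsilon$-term (or using the sharper form of the lemma) we get $X - Y \leq C\big(\phi''(|\overline x-\overline y|)L + K\big) I_n$ on the relevant subspace, and more precisely, testing the block inequality against vectors of the form $(\xi,\xi)$ gives the scalar bound $\langle (X-Y)\xi,\xi\rangle \leq (4K)|\xi|^2$ while testing against $(\xi,-\xi)$ controls the part involving $\phi''$.

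Next I would insert these jets into the equation. The key point is that the slopes $p$ and $\tilde p$ arising here satisfy $|p| > \kappa$ and $|\tilde p| > \kappa$: indeed, in the regime where the lemma is used (large $L$, points not too close to $\partial B_1$), the term $L\phi'(|\overline x-\overline y|)$ dominates, so the gradient argument lands outside the collapse ball $B_\kappa$. Hence the $\kappa$-grad viscosity inequalities are activated: from $(p,X) \in \overline{J}^{2,+}u(\overline x)$ and $u$ being a $\kappa$-grad subsolution, $(|p|-\kappa)_+^q F(X) \geq f(\overline x)$, so $F(X) \geq (|p|-\kappa)_+^{-q} f(\overline x) \geq -(|p|-\kappa)_+^{-q}\|f\|_\infty$; symmetrically, $F(Y) \leq (|\tilde p|-\kappa)_+^{-q}\|f\|_\infty$. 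Subtracting and using $(\lambda,\Lambda)$-ellipticity of $F$ (Definition \ref{UE}),
\begin{equation}\nonumber
\mathcal{M}^-(X-Y) \leq F(X) - F(Y) \leq \|f\|_\infty\left[(|p|-\kappa)_+^{-q} + (|\tilde p|-\kappa)_+^{-q}\right].
\end{equation}

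Finally I would estimate $\mathcal{M}^-(X-Y)$ from below using the matrix inequality. Decompose $X-Y$ through its eigenvalues: the eigenvalues in directions "diagonal" to $\xi \mapsto (\xi,\xi)$ are bounded above by $4K$, contributing at least $-4n\Lambda K$ to $\mathcal{M}^-$ in the worst case, while the directions aligned with $\overline x - \overline y$ pick up the large negative contribution $-C\phi''L$; carefully, $\mathcal{M}^-(X-Y) \geq \lambda \sum (\text{negative eigenvalues}) + \Lambda \sum(\text{positive eigenvalues})$, and bounding positive eigenvalues by $4K$ and the single large negative eigenvalue by $-c\,\phi''(|\overline x-\overline y|)L$ yields, after dividing by $\lambda$, the stated inequality
\begin{equation}\nonumber
-4\phi''(|\overline x-\overline y|)L \leq 4n\frac{\Lambda}{\lambda}K + \frac{1}{\lambda}\|f\|_\infty\left[(|p|-\kappa)_+^{-q} + (|\tilde p|-\kappa)_+^{-q}\right].
\end{equation}
The main obstacle I anticipate is bookkeeping the matrix inequality sharply enough to land the precise constants $-4$ and $4n\Lambda/\lambda$: one must choose the test vectors in the block inequality correctly and track how the $\phi''$ term (which is negative when $\phi$ is concave, the relevant case for building the Lipschitz barrier) enters with the right sign and multiplicity. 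A secondary subtlety is justifying that the activation condition $|p|,|\tilde p| > \kappa$ is automatic at the maximum point $(\overline x,\overline y)$ rather than an extra hypothesis — this should follow from the structure of $\varphi$ and the location of the maximum, but it needs to be stated cleanly so the $\kappa$-grad inequalities may legitimately be invoked.
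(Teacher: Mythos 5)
Your overall architecture is exactly the paper's: apply Jensen--Ishii at $(\overline{x},\overline{y})$ (legitimate since $\phi(|x-y|)$ is smooth off the diagonal), test the block inequality on $(\xi,\xi)$ to confine $\mathrm{spec}(X-Y)$ below $\approx 4K$, test on $(\hat\eta,-\hat\eta)$ with $\hat\eta=(\overline{x}-\overline{y})/|\overline{x}-\overline{y}|$ to produce one eigenvalue below $\approx 4L\phi''+4K$, and feed the two jets into the $\kappa$-grad sub/supersolution inequalities. However, the concluding chain is oriented backwards in two places, and as written those steps fail. First, the equation yields a \emph{lower} bound on $F(X)-F(Y)$: from $F(X)\ge -( |p|-\kappa)_+^{-q}\|f\|_\infty$ and $F(Y)\le (|\tilde p|-\kappa)_+^{-q}\|f\|_\infty$ one gets $F(X)-F(Y)\ge -\|f\|_\infty[\cdots]$; your claimed upper bound $F(X)-F(Y)\le \|f\|_\infty[\cdots]$ is not derivable, since $X$ is a superjet matrix and $Y$ a subjet matrix, giving one-sided information only. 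Second, your plan to ``estimate $\mathcal{M}^-(X-Y)$ from below using the matrix inequality'' cannot work: Jensen--Ishii only bounds $X-Y$ from \emph{above} (all the tests give $\langle(X-Y)\zeta,\zeta\rangle\le\cdots$), so there is no lower bound on its eigenvalues to exploit; moreover the eigenvalue formula you quote for $\mathcal{M}^-$ ($\Lambda$ on positive, $\lambda$ on negative eigenvalues) is actually that of $\mathcal{M}^+$. The correct closure, as in the paper, is the opposite pairing: $-\|f\|_\infty[\cdots]\le F(X)-F(Y)\le \mathcal{M}^+(X-Y)\le \Lambda(n-1)(4K+2\iota)+\lambda(4L\phi''+4K+2\iota)$, where the very negative eigenvalue is weighted by $\lambda$ precisely because it appears inside $\mathcal{M}^+$; dividing by $\lambda$ and letting $\iota\to0$ gives the stated constants $-4$ and $4n\Lambda/\lambda$. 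The ingredients are all present in your sketch, but the argument as written does not close, so you must reverse the orientation throughout.

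On your secondary worry: the activation $|D_x\varphi(\overline{x},\overline{y})|,|D_y\varphi(\overline{x},\overline{y})|>\kappa$ is not automatic from the location of the maximum for arbitrary $L,K$; the paper simply restricts to that case at the start of the proof (when $q>0$ and a gradient lies in $\overline{B}_\kappa$ the right-hand side is $+\infty$ and the conclusion is vacuous), and in the application the choice $L>4(1+\kappa)$, $L\ge 4K$ forces $|D_{x}\varphi|,|D_y\varphi|\ge L/4>\kappa$. So this should be stated as a case split or guaranteed by the hypotheses on $L$, not deduced from the maximum point.
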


\begin{proof}
Consider 
\begin{equation}\nonumber
D_x \varphi(\overline{x},\overline{y}), \;  D_y \varphi(\overline{x},\overline{y}) \in \mathbb{R}^n \backslash \overline{B}_\kappa.
\end{equation}
From Jensen-Ishii's Lemma \cite[Theorem 3.2]{CIL92}, there exist $X,Y \in \mathcal{S}(n)$, such that
\begin{equation}\label{limiting_sub_super}
\displaystyle (|D_x \varphi(\overline{x},\overline{y})| - \kappa)^q_+F(X) \geq f(\overline{x}) \quad \mbox{and} \quad \displaystyle (|D_y \varphi(\overline{x},\overline{y})| - \kappa)^q_+F(Y) \leq f(\overline{y}).
\end{equation}

In addition, 
    \begin{equation}\label{viscosity_ineq}
\left[
  \begin{array}{cc}
  X &  0  \\[0.2cm]
  0 &  -Y  
  \end{array}
\right]
\leq
\left[
  \begin{array}{cc}
  Z &  -Z  \\[0.2cm]
  -Z &  Z  
  \end{array}
\right]
+
(2K + \iota)I_{2n\times 2n},
\end{equation}
where $Z = L D^2_x\phi(|\cdot|)(\overline{x}-\overline{y})$. Estimate \eqref{viscosity_ineq} applied to vectors $(\xi,\xi)$, provides $spec(X - Y) \subset (-\infty, 4K + 2\iota]$. On the other hand, now choosing $(\hat{\eta}, -\hat{\eta})$, for $\hat{\eta} = (\overline{x} - \overline{y})/|\overline{x} - \overline{y}|$, gives
\begin{equation}\nonumber
\begin{array}{lll}
    \displaystyle (X-Y)\hat{\eta}\cdot \hat{\eta} & \leq & \displaystyle 4Z\hat{\eta} \cdot \hat{\eta} + (4K + 2\iota)\\[0.3cm]
    & = & \displaystyle 4L\phi''(|\overline{x} - \overline{y}|) + 4K + 2\iota.
\end{array}
\end{equation}
This implies that at least one eigenvalue of $(X-Y)$ should be less than 
$$
4L\phi''(|\overline{x} - \overline{y}|) + 4K + 2\iota.$$
Therefore,
\begin{equation}\nonumber
\begin{array}{lll}
    \mathcal{M}^+(X - Y)  & \leq &\displaystyle \Lambda(n-1)(4K + 2\iota) + \lambda(4L\phi''(|\overline{x} - \overline{y}|) + 4K + 2\iota) \\[0.4cm]
    & = & \displaystyle n\Lambda(4K + 2\iota) + 4\lambda L\phi''(|\overline{x} - \overline{y}|).
\end{array}    
\end{equation}
From \eqref{UE} and \eqref{limiting_sub_super}, we conclude
\begin{equation}\nonumber
    \begin{array}{c}
      \displaystyle -\|f\|_\infty\left[ \left(\left|D_x \varphi(\overline{x},\overline{y})\right|-\kappa \right)_+^{-q} +\left(\left|D_y \varphi(\overline{x},\overline{y})\right|-\kappa \right)_+^{-q} \right] \leq \mathcal{M}^+(X-Y),     
    \end{array}
\end{equation}
and the Lemma is proven.
\end{proof}

We are ready to deliver a proof of Theorem \ref{LipTHM}; extra care is required to keep track of all constants' dependence. 

\begin{proof}[Proof of Theorem \ref{LipTHM}]
The idea is to show the existence of universal positive parameters $L$ and $K$, such that
\begin{equation}\label{tacobell}
u(x) - u(y) \leq L|x-y| +K\left(|x|^2 + |y|^2\right),
\end{equation}
for each $(x,y) \in B_{1/2}\times B_{1/2}$. 

Let us denote
\begin{equation}\label{barrier_func}
    \phi(t) = \frac{3t - 2t^{3/2}}{3}
\end{equation}
for $t \in [0,1]$. We further define
\begin{equation}\nonumber
    M \coloneqq \sup_{x,y \in \overline{B}_{1/2}}\left(u(x) - u(y) - L\phi(|x-y|) - K\left(|x|^2 - |y|^2\right)\right).
\end{equation}
Note that showing $M \leq 0$ yields \eqref{tacobell}. The strategy is then to assume that $M > 0$ and verify that this implies a constraint to the size of $L$ and $K$. 

Let $(\overline{x},\overline{y})$ be the point in which $M$ is attained. Since $\phi(0)=0$, we easily see that $\overline{x} \not = \overline{y}$. Additionally,
$$
    L \phi(|\overline{x}-\overline{y}|) + K \left(|\overline{x}|^2 + |\overline{y}|^2\right) < u(\overline{x}) - u(\overline{y}) \leq 2.
$$
This implies that, choosing $K$ universally large, there holds $|\overline{x} - \overline{y}| \leq 1/4$. Also, 
$$
    \frac{1}{2} \leq \phi'(|\overline{x} - \overline{y}|) \leq 1
$$
and thus, for $L \geq 4K$, we have 
\begin{equation}\label{hilton}
    \frac{L}{4} \leq \frac{L}{2} - K \leq \min\{|D_x \varphi(\overline{x},\overline{y})|, |D_y \varphi(\overline{x},\overline{y})|\},
\end{equation}
where, hereafter, 
$$
    \varphi(x,y)\coloneqq L \phi(|x-y|) + K(|x|^2 + |y|^2).
$$
From Lemma \ref{jensen_ishii_lemma} and the fact that  $\phi''(|\overline{x} - \overline{y}|) < -1$, we derive
\begin{equation}\label{pde}
    L  \leq \displaystyle n \frac{\Lambda}{\lambda}K + \frac{1}{4\lambda}\|f\|_\infty\left(\left(\left|D_x\varphi(\overline{x},\overline{y})\right|-\kappa \right)_+^{-q} +\left(\left|D_y\varphi(\overline{x},\overline{y})\right|-\kappa \right)_+^{-q} \right).
\end{equation}
Taking in account the last two estimates, we obtain
\begin{equation}\nonumber
    L - n\frac{\Lambda}{\lambda} K \leq \frac{1}{\lambda}\, \|f\|_\infty \left(\frac{L}{4}-\kappa \right)^{-q}_+
\end{equation}
For $L > 4(1+\kappa)$, we conclude
\begin{equation}\label{contr_ineq}
    L < K n \frac{\Lambda}{\lambda} + \frac{1}{\lambda}\, \|f\|_\infty.
\end{equation}
Thus, if one selects
$$
    L > \max \left\{4(1+\kappa), 4K, K n\frac{\Lambda}{\lambda} + \frac{1}{\lambda}\|f\|_\infty \right\},
$$
we conclude $M$ cannot be a positive quantity and the proof of Theorem \ref{LipTHM} is complete.
\end{proof}

\section{Compactness for scaled PDEs}\label{sct-Compactness}

In this section, we establish equicontinuity estimates for normalized solutions of 
\begin{equation}\label{p-regmaineq}
    (|\xi + \vartheta Du|-\kappa)_+^qF(D^2u) = f.
\end{equation}
The main goal is to obtain estimates that are independent of $\xi \in \mathbb{R}^n$ and of $\vartheta>0$. 

We note that the aforementioned equation is understood in the $\kappa$-grad viscosity sense for the auxiliary function $v(x) = \xi \cdot x + \vartheta u(x)$ with respect to the PDE
\begin{equation}\label{scaled_eq}
    (|Dv|-\kappa)_+^qF_{\vartheta}(D^2 v) = f_\vartheta,
\end{equation}
where $F_\vartheta(M) = \vartheta F(\vartheta^{-1}M)$ and $f_\vartheta = \vartheta f$. That is, saying $u$ verifies \eqref{p-regmaineq} means that $v$ is a $\kappa$-grad viscosity solution of \eqref{scaled_eq}. With that understood, we pass to discuss the first technical lemma needed to obtain uniform compactness for such PDEs.
 
\begin{lemma}\label{bustelolemma}
Assume $u$ is normalized and satisfies \eqref{p-regmaineq} with $\|f\|_\infty \leq 1$. Given $\mu \in (0,1)$, there exists a constant $C$ depending only on $n$, $\lambda$ and $\Lambda$, such that if 
\begin{equation}\label{mu_cond}
    |\xi| \geq \kappa + 2\mu \quad \mbox{and} \quad \vartheta \leq  \frac{\mu^{q+1}}{2C},    
\end{equation}
then
\begin{equation}\nonumber
    \sup_{x,y \in B_{1/2}}\frac{|u(x)-u(y)|}{|x-y|} \leq C\mu^{-q}.
\end{equation}
\end{lemma}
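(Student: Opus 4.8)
\textbf{Proof proposal for Lemma \ref{bustelolemma}.}

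The plan is to follow the same double-variable argument used for Theorem \ref{LipTHM}, but now tracking how the gradient shift $\xi$ and the smallness of $\vartheta$ interact. Recall that $u$ satisfying \eqref{p-regmaineq} means $v(x) = \xi\cdot x + \vartheta u(x)$ is a $\kappa$-grad viscosity solution of \eqref{scaled_eq} with $F_\vartheta(M)=\vartheta F(\vartheta^{-1}M)$ (still $(\lambda,\Lambda)$-elliptic) and $f_\vartheta = \vartheta f$. So I would first rescale the problem to $v$, prove a Lipschitz bound for $v$, and then translate it back: since $Dv = \xi + \vartheta Du$, a bound $\mathrm{Lip}(v) \le \Theta$ gives $\vartheta\,\mathrm{Lip}(u) \le \Theta + |\xi|$, which is useless unless we extract the correct dependence. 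The right way is instead to bound directly the Lipschitz seminorm of $u$ by running the Jensen--Ishii machinery on the function $w(x,y) = u(x) - u(y)$ against the barrier $\varphi(x,y) = L\phi(|x-y|) + K(|x|^2+|y|^2)$ with $\phi$ as in \eqref{barrier_func}, but now comparing with the PDE \eqref{p-regmaineq} rather than \eqref{maineq}.

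Concretely, I would assume for contradiction that
$$
M \coloneqq \sup_{x,y\in\overline{B}_{1/2}}\Bigl(u(x)-u(y) - L\phi(|x-y|) - K(|x|^2+|y|^2)\Bigr) > 0,
$$
pick the maximizer $(\overline x,\overline y)$ with $\overline x\neq\overline y$, and use $\phi(0)=0$ together with $\|u\|_\infty\le 1$ to force $|\overline x-\overline y|\le 1/4$ and $\tfrac12\le\phi'\le 1$ once $K$ is chosen universally large and $L\ge 4K$. The test function that touches $u$ from above at $\overline x$ is $\overline\varphi(x) = u(\overline y) + L\phi(|x-\overline y|) + K(|x|^2+|\overline y|^2)$ plus a constant, whose gradient at $\overline x$ has size comparable to $L$; the crucial point is that the relevant \emph{activated} gradient for the equation \eqref{p-regmaineq} is $\xi + \vartheta D\overline\varphi(\overline x)$, and
$$
\bigl|\xi + \vartheta D\overline\varphi(\overline x)\bigr| \ge |\xi| - \vartheta|D\overline\varphi(\overline x)| \ge (\kappa + 2\mu) - \vartheta\,C_0 L,
$$
for a purely dimensional $C_0$ controlling $|D\overline\varphi(\overline x)|\le C_0(L+K)\le C_0' L$. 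Provided $\vartheta C_0 L \le \mu$ — which I will need to arrange — the activated gradient exceeds $\kappa+\mu$, so the $\kappa$-grad viscosity inequality is genuinely available, and the degenerate factor satisfies $(|\xi+\vartheta D\overline\varphi(\overline x)|-\kappa)_+ \ge \mu$. Running the Pucci estimate exactly as in Lemma \ref{jensen_ishii_lemma} (using $\phi''<-1$ and the matrix inequality \eqref{viscosity_ineq}) then yields
$$
\lambda L \le n\Lambda K + \|f\|_\infty\,\mu^{-q} \le n\Lambda K + \mu^{-q},
$$
so that choosing $L \sim C\mu^{-q}$ with a dimensional $C$ (also absorbing $4(1+\kappa)$ and $4K$) produces the contradiction, giving the desired estimate $\mathrm{Lip}(u;B_{1/2})\le C\mu^{-q}$.

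The one genuine obstacle is the circularity between the size of $L$ and the condition $\vartheta C_0 L \le \mu$ needed to guarantee the gradient stays activated: $L$ is itself of order $\mu^{-q}$, so the requirement becomes $\vartheta \lesssim \mu^{q+1}$, which is \emph{exactly} the hypothesis $\vartheta \le \mu^{q+1}/(2C)$ in \eqref{mu_cond} — so the constant $C$ in the statement must be chosen as the maximum of the dimensional constants arising in the Pucci estimate and in the bound $|D\overline\varphi(\overline x)|\le CL$. To make this rigorous without assuming the conclusion, I would either (i) run the argument on the truncated sup over the region where $L\phi+K|x|^2+K|y|^2 \le 2$, where $|\overline x-\overline y|$ and hence $|D\overline\varphi(\overline x)|$ are automatically controlled by $L$ independently of whether $M>0$, or (ii) note that the barrier's gradient at the maximizer is bounded by $L\phi'(|\overline x-\overline y|) + 2K \le L + 2K \le \tfrac32 L$ using the already-established $|\overline x-\overline y|\le 1/4$ and $L\ge 4K$, so $C_0 = 3/2$ works and the activation condition reads $\vartheta\le \tfrac{2\mu}{3L}$, consistent with \eqref{mu_cond} after fixing $L = C\mu^{-q}$. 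Option (ii) is cleaner and is the route I would take; the rest is a bookkeeping exercise identical in spirit to the proof of Theorem \ref{LipTHM}.
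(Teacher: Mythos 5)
Your proposal is correct and follows essentially the same route as the paper: the double-variable Jensen--Ishii argument with the barrier $L\phi(|x-y|)+K(|x|^2+|y|^2)$, the observation that the activated gradient $\xi+\vartheta D\varphi$ stays above $\kappa+\mu$ because $|D\varphi|\lesssim L$ and $\vartheta\le\mu^{q+1}/(2C)$, and the resulting bound $L\lesssim \mu^{-q}$. You also correctly identify (and resolve, exactly as the paper does) the apparent circularity between the size of $L$ and the activation condition, which is precisely why the hypothesis on $\vartheta$ has the form $\vartheta\le\mu^{q+1}/(2C)$.
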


\begin{proof} 
The proof follows the lines of reasoning employed in Section \ref{sct proof lip}. We will only comment on the necessary amendments. 

Consider $\phi$ as defined in \eqref{barrier_func} and
$$
    M \coloneqq \sup_{x,y \in \overline{B}_{1/2}}\left(u(x) - u(y) - L\phi(|x-y|) - K\left(|x|^2 - |y|^2\right)\right).
$$
Let $(\overline{x},\overline{y})$ be the pair where $M$ is attained and assume $M>0$. First, we localize the points where $M$ is attained by choosing $K$ large enough. 

The auxiliary function $v(x) = \xi \cdot x + \vartheta u(x)$ solves \eqref{scaled_eq} in the $\kappa$-grad viscosity sense, thus  we can apply Lemma \ref{jensen_ishii_lemma} with $\varphi(x,y) \coloneqq L\phi(|x-y|) + K(|x|^2 + |y|^2)$ as to reach
\begin{equation}\label{ineq_in_lemma}
    L \leq n\frac{\Lambda}{\lambda}K + \frac{1}{4\lambda}\left( \left(\left|\vartheta D_x \varphi(\overline{x},\overline{y})+\xi\right|-\kappa \right)_+^{-q} +\left(\left|\vartheta D_y \varphi(\overline{x},\overline{y})-\xi\right|-\kappa \right)_+^{-q} \right).
\end{equation}
From \eqref{mu_cond} and the estimate
$$
    \max\{|D_x \varphi(\overline{x},\overline{y})|, |D_y \varphi(\overline{x},\overline{y})| \} \leq 2L,
$$
there holds
\begin{equation}\nonumber
    \min\{|\xi + \vartheta D_x\varphi(\overline{x},\overline{y})|, |\xi - \vartheta D_y\varphi(\overline{x},\overline{y})|\} \geq  \kappa+\mu.
\end{equation}
Therefore, from estimate \eqref{ineq_in_lemma}, we can further estimate
\begin{equation}\nonumber
    L < n\frac{\Lambda}{\lambda}K + \frac{1}{2\lambda}\mu^{-q} \leq \overline{C} \mu^{-q},    
\end{equation}
for $\overline{C}=C(n,\lambda,\Lambda)$. 
The conclusion is that if $L \geq \overline{C} \mu^{-q}$, then $M\le 0$, which is equivalent to the thesis of the Lemma.
\end{proof}

\section{$C^1$ regularity up to the free boundary}\label{sct-C1}

In this Section, we establish gradient continuity for effective viscosity solutions of \eqref{maineq}, viz Definition \ref{def-effective}. Some of the technical lemmas to be presented here, though, are still valid for the weaker notion of solutions, according to Definition \ref{def-kappa-grad}. We will state such results in their more general form for future references. 

We further comment that in this section we will deal with the solutions of \eqref{maineq} for a universally small $\kappa>0$, to be chosen later in the proof. According to Remark \ref{norm}, this is not restrictive.  The main result of this section reads as follows:

\begin{theorem}\label{C1THM}
Let $u$ be an effective viscosity solution of \eqref{maineq} in $B_1$. Then, there exists a modulus of continuity $\sigma$, depending on $\kappa$, $q$, $n$, $\lambda$, $\Lambda$, $\|f\|_{Lip}$ and $\|u\|_\infty$, such that 
$$
    (|D u|-\kappa)_+ \in C^{0, \sigma}(B_{1/2}).
$$
\end{theorem}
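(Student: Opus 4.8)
\textbf{Proof strategy for Theorem \ref{C1THM}.} The plan is to establish an oscillation decay for the function $g \coloneqq (|Du|-\kappa)_+$ by bootstrapping between two regimes, organized by the \emph{gradient level-set distance} $d(x) \coloneqq \sup\{ r : B_r(x) \subset \Omega_u\}$ from the free boundary. The output will be a (possibly slowly decaying) universal modulus $\sigma$. Throughout I will use Remark \ref{norm} to reduce to $-1\le u\le 1$ and $\kappa$ universally small, and I will freely use the uniform Lipschitz bound from Theorem \ref{LipTHM} so that $g$ is bounded by a universal constant and $\Omega_u = \{g>0\}$ is open.

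\textbf{Step 1: interior gradient regularity far from $\Gamma_u$.} Fix $x_0 \in \Omega_u$ and set $d = d(x_0)>0$. On $B_d(x_0)$ the equation is uniformly elliptic once we know $|Du|$ is bounded away from $\kappa$; the relevant quantitative statement is that, at scale comparable to the distance to $\{|Du|=\kappa+\mu\}$ (for a small $\mu$), the rescaled solution is, up to a controlled error, close to a solution of a uniformly elliptic equation $\overline F(D^2 h)=0$. This is exactly the content of the compactness Lemma \ref{bustelolemma} (applied to the affine-perturbed function $v = \xi\cdot x + \vartheta u$), which shows that when $|\xi|\ge \kappa+2\mu$ and $\vartheta$ is small relative to $\mu^{q+1}$, the solution has Lipschitz norm $\lesssim \mu^{-q}$, giving the needed compactness of the family of rescalings. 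From this, the classical Caffarelli perturbation argument (as in \cite{C89}, see also \cite{Teix20}) yields an interior estimate of the form
\begin{equation}\label{c1step1}
[Du]_{C^{0,\alpha_\mu}(B_{d/2}(x_0))} \le C(\mu)\, d^{-\alpha_\mu},
\end{equation}
where $0<\alpha_\mu\ll 1$ is a Hölder exponent and $C(\mu)$ a constant, both degenerating (i.e. $\alpha_\mu \to 0$, $C(\mu)\to\infty$) as $\mu\to 0$. In particular, $\operatorname{osc}_{B_r(x_0)} g \le C(\mu) (r/d)^{\alpha_\mu}$ for $r\le d/2$.

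\textbf{Step 2: improvement of oscillation at the gradient level, near $\Gamma_u$.} This is the new ingredient and the heart of the matter. I would prove a De Giorgi-type lemma: there exist universal $\mu_0>0$ and $\theta\in(0,1)$ such that if $x_0\in\Gamma_u$ and $\sup_{B_1} g \le \mu$ for some $\mu\le\mu_0$, then $\sup_{B_{1/2}} g \le \theta\mu$ — or, more precisely, $\operatorname{osc}$ of $g$ drops by a fixed factor over a fixed shrink in scale, \emph{once $g$ is small}, with the gain measured relative to the level $\mu$ itself. The mechanism: where $g\le\mu$, we have $|Du|\le \kappa+\mu$, so by the uniform Lipschitz bound of Theorem \ref{LipTHM} the oscillation of $u$ over $B_{1/2}$ is at most $(\kappa+\mu)/2$; rescaling $u$ to unit size forces the scaled degeneracy parameter to be tiny, which (again via Lemma \ref{bustelolemma}-type reasoning) places the scaled solution $\mu^{-1}$-close in $C^{1,\beta}$ to an $\overline F$-harmonic function of unit Lipschitz norm, and a gradient estimate for the latter closes the induction. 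Iterating this dichotomy — at each dyadic scale, either we are ``far'' from $\Gamma_u$ relative to the current scale and invoke Step 1, or we are ``close'' and $g$ is correspondingly small so Step 2 applies — produces a geometric decay $\operatorname{osc}_{B_{2^{-k}}(x_0)} g \le \omega(2^{-k})$ with $\omega$ a concrete, albeit possibly logarithmically slow, modulus. The delicate point is the interplay quantified in the introduction: the exponent $\alpha_\mu$ from Step 1 deteriorates as $\mu\to 0$ precisely where Step 2 kicks in, so one must choose, at each scale, the threshold $\mu$ as a function of the scale to keep the compound decay summable.

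\textbf{Step 3: assembling the modulus up to $\Gamma_u$ and gluing.} For a general point $x_0\in B_{1/2}$ and radius $r$, compare $r$ with $d(x_0)$: if $r\le \tfrac12 d(x_0)$, Step 1 controls $\operatorname{osc}_{B_r(x_0)} g$; if $r > \tfrac12 d(x_0)$, pick $z_0\in\Gamma_u$ with $|x_0-z_0| = d(x_0) < 2r$, so $B_r(x_0)\subset B_{3r}(z_0)$ and Step 2 (with the scale-dependent threshold) controls $\operatorname{osc}_{B_{3r}(z_0)} g$. In either case $\operatorname{osc}_{B_r(x_0)} g \le \sigma(r)$ for a modulus $\sigma$ depending only on the stated quantities $\kappa,q,n,\lambda,\Lambda,\|f\|_{\mathrm{Lip}},\|u\|_\infty$; continuity of $g$ up to $\Gamma_u$ (where $g$ vanishes) and hence on all of $B_{1/2}$ follows. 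Note the Lipschitz continuity of $f$ enters in Step 1: Caffarelli's method upgrades $C^{0,\alpha}$ to $C^{1,\alpha}$ for the gradient only when the right-hand side has enough regularity, here a Dini/Lipschitz condition, which is why $\|f\|_{\mathrm{Lip}}$ (rather than just $\|f\|_\infty$) appears in the modulus.

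\textbf{Main obstacle.} The crux is Step 2 and its coupling with Step 1: one must run the improvement-of-oscillation iteration \emph{at the level of $g=(|Du|-\kappa)_+$} rather than of $u$, and control how the Hölder exponent $\alpha_\mu$ collapses as the threshold $\mu$ (hence the distance to $\Gamma_u$) shrinks, choosing the threshold as a function of scale so that the telescoping product of per-scale decay factors still converges to $0$. Making this bookkeeping rigorous — in particular showing the compactness of the rescaled family is genuinely uniform down to the free boundary, and that the resulting $\sigma$ is a bona fide modulus of continuity (monotone, vanishing at $0$) — is where the real work lies.
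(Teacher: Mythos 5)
Your overall architecture (a near/far dichotomy relative to the free boundary, De Giorgi-type decay in one regime, Caffarelli-type approximation in the other, then assembling a modulus by optimizing over the threshold $\mu$) matches the paper's outline, and your Step 1 and the final optimization $\sigma(t)=\min_\mu\{2\mu+C_\mu t^{\alpha_\mu}\}$ are essentially what the paper does. But there are two genuine gaps in the parts that carry the real weight.

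First, the mechanism you propose for Step 2 cannot work. You argue that where $g\le\mu$ the oscillation of $u$ is small, so after rescaling the solution is close in $C^{1,\beta}$ to an $\overline F$-harmonic function. In the dormant set $\{|Du|\le\kappa\}$ there is \emph{no equation at all} (any Lipschitz function with small slope is a solution there), so no compactness argument can place $u$ near an $F$-harmonic function on a ball meeting that set; and closeness in $C^0$ to a harmonic function would in any case not force the supremum of $g$ to drop. The paper's actual mechanism is different: one differentiates the equation in the open set $\{\partial_e u>\kappa+\mu\}$ to show that $w_e=(\partial_e u-(\kappa+\mu))_+$ satisfies $\mathcal M^+(D^2w_e)+q\mu^{-q-1}\|f\|_\infty|Dw_e|\ge-\mu^{-q}\|f\|_{\Lip}$ in all of $B_1$ (Lemma \ref{sub_equation}), and then applies the weak Harnack inequality to the nonnegative supersolution $\|w_M\|_\infty-w_e$ (Lemma \ref{improv_oscil_step_1}). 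This is the ``improvement of oscillation at the gradient level'' and it is the paper's key new ingredient; your proposal replaces it with an argument that fails precisely where it is needed.

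Second, your dichotomy is organized by the \emph{spatial} distance $d(x_0)=\sup\{r:B_r(x_0)\subset\Omega_u\}$, but spatial distance to $\Gamma_u$ does not control ellipticity: inside $B_{d}(x_0)\subset\Omega_u$ one may still have $|Du|$ arbitrarily close to $\kappa$, so the equation is not uniformly elliptic there and the estimate \eqref{c1step1} is unjustified. The paper instead runs a measure-theoretic dichotomy at each dyadic scale: either the set $\{w_e\ge(1-\eta)\|w_M\|_\infty\}$ has small measure for every direction $e$ (then the weak Harnack step gives geometric decay of $\|w_M\|$), or at some first scale it occupies a large fraction of the ball for some $e$, in which case the compactness lemma of Colombo--Figalli \cite[Lemma 4.1]{CF14} yields closeness to an affine function of slope $\ge\kappa+\mu+d_\star$, and only then does the uniformly elliptic Caffarelli iteration (Lemmas \ref{compact_for_large_grad}, \ref{bypass_proximity}, Proposition \ref{reg_for_large_grad}) take over. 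Without this bridge from density information to flatness, your gluing in Step 3 has no way to enter the ``far'' regime with the quantitative hypotheses that regime requires.
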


We comment that the main new information given by Theorem \ref{C1THM} is that $u$ is uniformly in $C_{\text{loc}}^1$ in $\Omega_u$, up to the free boundary $\Gamma_u$; a non-trivial result, as no information can be retrieved from the local structure of $\Gamma_u$. Throughout this section, we shall obtain a slightly stronger result, from which Theorem \ref{C1THM} follows as a consequence. We state it here for future reference.

\begin{proposition}\label{unif_c1_thm}
Let $u$ be an effective viscosity solution of \eqref{maineq}. Then, given $0<\mu<1$, there exist constants $\alpha_\mu \in (0,1)$ and $C_\mu>0$, depending only upon $n$, $q$, $\lambda$, $\Lambda$, $\|f\|_{Lip}$, $\|u\|_\infty$ and $\mu$, such that 
$$
    \|(|Du| - (\kappa + \mu))_+\|_{C^{0,\alpha_\mu}(B_{1/2})} \leq C_\mu.
$$
\end{proposition}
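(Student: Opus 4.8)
The plan is to prove Proposition~\ref{unif_c1_thm} by dividing the ball into two regimes determined by the ``gradient level-set distance'' to the free boundary, and to treat each regime by a distinct mechanism. Fix $\mu\in(0,1)$ and set $\mu'\coloneqq \mu/2$. For a point $x_0\in B_{1/2}$, declare $x_0$ to be \emph{far} from $\Gamma_u$ if the gradient stays strictly above the threshold on a full ball around $x_0$, more precisely if $|Du|>\kappa+\mu'$ on $B_{r}(x_0)$ for a radius $r$ comparable to the distance of $x_0$ to the set $\{|Du|\le\kappa+\mu'\}$, and \emph{near} otherwise. The Lipschitz bound of Theorem~\ref{LipTHM} is what makes this dichotomy quantitative: it gives an a priori control $|Du|\le C_0$ everywhere, so on any region where $|Du|>\kappa+\mu'$ the coefficient $(|Du|-\kappa)_+^q$ is pinched between $(\mu')^q$ and $C_0^q$, hence uniformly elliptic with controlled constants, and the equation there reads $F(D^2u)=f/(|Du|-\kappa)_+^q$ with a right-hand side that is bounded (indeed Lipschitz, using $\|f\|_{\mathrm{Lip}}$ and the Lipschitz regularity of $|Du|$ once we have it, though only boundedness is needed to start).

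In the \emph{far} regime I would run the classical Caffarelli perturbative scheme: after translating and rescaling so that the good ball becomes $B_1$, the normalized solution is a viscosity solution of a uniformly elliptic equation with small right-hand side, hence $C^{1,\alpha}$-close to an $F$-harmonic function by the compactness/approximation argument (this is exactly the ``ala Caffarelli \cite{C89}'' step advertised in the introduction, and Section~\ref{sct-Compactness} provides the needed equicontinuity of the rescalings). This yields an interior estimate $[Du]_{C^{0,\alpha}(B_{r/2}(x_0))}\le C r^{-\alpha}\,\omega$ with $\alpha,C$ universal, where $r$ is the level-set distance and $\omega$ measures the oscillation of $Du$ on $B_r(x_0)$. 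To convert this family of scale-dependent interior estimates into a single modulus of continuity for $(|Du|-(\kappa+\mu))_+$ up to $\Gamma_u$, one observes that $(|Du|-(\kappa+\mu))_+$ vanishes on the $\mu$-sublevel set and is, by construction, zero outside the region where $|Du|>\kappa+\mu'$; so it suffices to estimate the oscillation of this truncated quantity between a ``near'' point (where it is zero) and any other point, and for that one walks along a chain of balls of geometrically decaying radii $r_k\sim 2^{-k}$ approaching the free boundary, summing the contributions $\sum_k C r_k^{-\alpha_k}\,\mathrm{osc}_{B_{r_k}}$. This is where the improvement-of-oscillation ingredient enters.

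The \emph{near} regime, and the summation just described, is the main obstacle, and it is handled by the new ``DeGiorgi-type improvement of oscillation at the gradient level'' alluded to in the introduction. The point is that the naive interior estimate degenerates, $\alpha_d\to 0$ as the level-set distance $d\to0$, so one cannot simply add up the pieces; instead one needs a discrete iteration showing that passing from scale $r$ to scale $\theta r$ (for a fixed $\theta\in(0,1)$) the oscillation of $(|Du|-(\kappa+\mu))_+$ contracts by a fixed factor less than $1$ \emph{as long as one is still outside the $\mu'$-sublevel set}, and that once the ball meets $\{|Du|\le\kappa+\mu'\}$ the truncated function is automatically small there because it is $\le (\mu'$-to-$\mu)$ gap plus an oscillation term. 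Concretely I would prove a dichotomy lemma: at scale $r$ around $x_0$, either (i) $|Du|>\kappa+\mu'$ on the whole ball $B_r(x_0)$, in which case the far-regime estimate gives a definite decay of $\mathrm{osc}\,(|Du|-(\kappa+\mu))_+$ at the next scale; or (ii) there is a point in $B_r(x_0)$ with $|Du|\le\kappa+\mu'$, and then by the Lipschitz bound on $u$ — hence the modulus of continuity of $|Du|$ we are bootstrapping, or more safely just the continuity of $(|Du|-\kappa)_+$ built into Definition~\ref{def-effective} together with compactness — $(|Du|-(\kappa+\mu))_+$ is already controlled by a modulus on a slightly smaller ball. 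Iterating this dichotomy produces the Hölder modulus $C_\mu$, $\alpha_\mu$ of Proposition~\ref{unif_c1_thm}, with the stated dependence on $\mu$ (and the blow-up of $C_\mu$, $1/\alpha_\mu$ as $\mu\to0$ is exactly why Theorem~\ref{C1THM} delivers only a modulus $\sigma$ rather than a Hölder exponent). Proposition~\ref{unif_c1_thm} then implies Theorem~\ref{C1THM} by taking $\mu=\mu(\varepsilon)\to0$ and reading off the resulting (non-Hölder, in general) modulus of continuity for $(|Du|-\kappa)_+$.
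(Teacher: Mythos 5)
Your overall architecture (a near/far dichotomy with a De Giorgi-type mechanism near the free boundary and a Caffarelli perturbative scheme far from it) matches the paper's, but the core of your near-regime argument has a genuine gap, and it is exactly at the point where the real work of the proof lies. In your dichotomy lemma, case (ii) asserts that if some point of $B_r(x_0)$ satisfies $|Du|\le\kappa+\mu'$, then $(|Du|-(\kappa+\mu))_+$ is already controlled by a modulus on a smaller ball. None of the tools you invoke delivers this: the Lipschitz bound of Theorem \ref{LipTHM} controls the \emph{size} of $Du$, not its oscillation; Definition \ref{def-effective} does \emph{not} build in continuity of $(|Du|-\kappa)_+$ (it only requires $\Omega_u$ to be open and the PDE to hold there in the viscosity sense); and "the modulus of continuity of $|Du|$ we are bootstrapping" is precisely the conclusion of the proposition, so using it here is circular. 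A priori $|Du|$ could jump from $\kappa+\mu'$ to the Lipschitz constant across the (possibly wildly irregular) free boundary -- ruling this out is the content of the theorem.

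The paper closes this gap with a different, measure-theoretic dichotomy run at every dyadic scale around a fixed point. It first shows (Lemma \ref{sub_equation}) that $w_e=(\partial_e u-(\kappa+\mu))_+$ satisfies $\mathcal{M}^+(D^2w_e)+q\mu^{-q-1}\|f\|_\infty|Dw_e|\ge-\mu^{-q}\|f\|_{Lip}$ in the \emph{whole} ball -- legitimate because $w_e$ vanishes wherever the equation degenerates -- so that the nonnegative function $\|w_M\|_\infty-w_e$ is a supersolution to which the weak Harnack inequality applies. Then, at each scale, either the set where $w_e$ is close to $\|w_M\|_\infty$ has small measure for every direction $e$, in which case weak Harnack forces $\|w_M\|_\infty$ to decay by a definite factor at the next scale (Lemma \ref{improv_oscil_step_1}, Proposition \ref{small_grad_improv_osc}); or that set has large measure for some $e$ at a first scale $i_\star$, in which case a Colombo--Figalli-type lemma produces closeness to an affine function with slope at least $\kappa+\mu$, and from that scale onward the perturbative iteration (Proposition \ref{reg_for_large_grad}) takes over. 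This stopping-time structure also replaces your problematic chaining sum $\sum_k Cr_k^{-\alpha_k}\operatorname{osc}_{B_{r_k}}$: no summation over degenerating exponents is ever needed, since each regime yields geometric decay on its own range of scales and the two are simply glued at $i_\star$. To repair your proof you would need to supply precisely this improvement-of-oscillation lemma (or an equivalent substitute) in place of your case (ii).
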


Critical to Proposition \ref{unif_c1_thm} is the fact that, while the H\"older exponent $\alpha_\mu$ may degenerate as $\mu \to 0$, the estimate is local, i.e. holds within $B_{1/2}$, and not only in the region where the PDE drives the system.

The proof of Theorem \ref{unif_c1_thm} will be divided into two main steps: given $0< \mu <1$, we slice $\Omega_u$ as follows
$$
    \Omega_u = \left\{x \in B_1 \suchthat \kappa < |Du| < \kappa + \mu \right\} \cup \left\{x \in B_1 \suchthat |Du| > \kappa + \mu \right\}.
$$
At points $\mu$-close (in the sense of level set of $|Du|$) to the free boundary $\Gamma_u$, we employ a \textit{De Giorgi} based argument to get improvement of oscillation for functions of $Du$, which corresponds to subsection \ref{subsect small grad}. At points $\mu$-far away from the free boundary, the equation is uniformly elliptic, so one can proceed with an approximation argument.

\subsection{Improvement of oscillation near the free boundary}\label{subsect small grad} Hereafter in this section we assume the source term $f$ to be a Lipschitz continuous function. Note that if $u$ is an effective viscosity solution of \eqref{maineq}, then it is locally of class $C^{1,\alpha}$ in $\{ |Du| > \kappa \}$. 

\begin{lemma}\label{sub_equation}
Let $u$ be an effective viscosity solution of \eqref{maineq} with $f\in \Lip(\bar{B}_1)$. For a unit vector $e \in \partial B_1$, consider $w$ to be defined as
$$
    w = (\partial_e u - (\kappa+\mu))_+.
$$
Then, $w$ satisfies
$$
    \mathcal{M}^+(D^2w)+q\,\mu^{-q-1}\,\|f\|_{\infty} |Dw| \geq - \mu^{-q}\,\|f\|_{Lip},
$$
in the viscosity sense in $B_1$.
\end{lemma}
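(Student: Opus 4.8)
The plan is to work in the open set $\Omega_u = \{|Du| > \kappa\}$, where by Definition \ref{def-effective} the equation $(|Du|-\kappa)_+^q F(D^2u) = f$ holds in the classical viscosity sense, and where, by the cited $C^{1,\alpha}$ theory of \cite{ART15,IS13,APPT}, $u \in C^{1,\alpha}_{\mathrm{loc}}(\Omega_u)$. The key structural observation is that on the set $\{w > 0\} = \{\partial_e u > \kappa + \mu\} \subset \Omega_u$ we have $|Du| - \kappa > \mu$, so the degeneracy factor $(|Du|-\kappa)_+^q$ is bounded below by $\mu^q$; hence, wherever $w$ is positive, $u$ solves the non-degenerate (but still non-variational) equation $F(D^2u) = (|Du|-\kappa)^{-q} f$, and one may hope to differentiate it in the direction $e$.

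First I would establish, at least formally, that $v := \partial_e u$ satisfies a linear-type inequality on $\Omega_u$: differentiating $F(D^2 u) = (|Du|-\kappa)_+^{-q} f$ in the direction $e$ gives, schematically,
\begin{equation}\nonumber
    F_{ij}(D^2u)\,\partial_{ij} v = -q(|Du|-\kappa)_+^{-q-1}\tfrac{Du}{|Du|}\cdot Dv \; f + (|Du|-\kappa)_+^{-q}\partial_e f .
\end{equation}
Using uniform ellipticity to replace $F_{ij}\partial_{ij}v$ by the Pucci operator $\mathcal{M}^+(D^2 v)$, together with the bounds $(|Du|-\kappa)_+^{-q} \le \mu^{-q}$ and $(|Du|-\kappa)_+^{-q-1}\le \mu^{-q-1}$ valid on $\{w>0\}$, and $|\tfrac{Du}{|Du|}\cdot Dv|\le |Dv| = |Dw|$ there, I obtain
\begin{equation}\nonumber
    \mathcal{M}^+(D^2 v) + q\mu^{-q-1}\|f\|_\infty |Dv| \ge -\mu^{-q}\|f\|_{\mathrm{Lip}} \qquad \text{on } \{w>0\}.
\end{equation}
Since $v$ is only $C^{1,\alpha}$, this differentiation must be justified in the viscosity sense; the standard route is to use incremental quotients $u_h^e(x) := (u(x+he) - u(x))/h$, which solve a uniformly elliptic equation with measurable coefficients (Bellman/linearization argument, or the Ishii–Lions touching argument directly on the original equation), derive the inequality for $u_h^e$ with constants independent of $h$, and pass to the limit $h\to 0$ using stability of viscosity solutions and the interior regularity that guarantees locally uniform convergence $u_h^e \to \partial_e u$.

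Next I would upgrade the inequality from $\{w>0\}$ to all of $B_1$ by a truncation argument: $w = (v - (\kappa+\mu))_+$ is the positive part of a subsolution-type quantity, and since $t \mapsto (t-(\kappa+\mu))_+$ is convex and nondecreasing, the composition with $v$ preserves the relevant subsolution inequality in the viscosity sense, with the right-hand side set to $0$ on $\{w = 0\}$; on $\{w=0\}$ a test function touching $w$ from above must have nonnegative Hessian contributions and zero gradient, so $\mathcal{M}^+(D^2 w) + q\mu^{-q-1}\|f\|_\infty|Dw| \ge 0 \ge -\mu^{-q}\|f\|_{\mathrm{Lip}}$ is automatic there. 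The only subtlety is at free-boundary points $\Gamma_u \cap \partial\{w>0\}$, i.e. points on $\partial B_1 \cap \partial\Omega_u$ that also lie in $\overline{\{w>0\}}$: but since $\mu > 0$, the set $\{w>0\} = \{|Du|>\kappa+\mu\}$ is compactly contained (in the level-set sense) inside $\Omega_u$ away from $\Gamma_u = \partial\{|Du|>\kappa\}$, so by continuity of $(|Du|-\kappa)_+$ the closure of $\{w>0\}$ does not meet $\Gamma_u$; hence no test function touching $w$ from above at a point of $\Gamma_u$ can have positive gradient, and the inequality holds trivially at such points as well.

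The main obstacle is the rigorous justification of differentiating the degenerate equation in the direction $e$ to produce a genuine viscosity subsolution inequality for $\partial_e u$ with the stated structure — in particular controlling the first-order term with the correct constant $q\mu^{-q-1}\|f\|_\infty$ and the right-hand side $-\mu^{-q}\|f\|_{\mathrm{Lip}}$ uniformly, despite only $C^{1,\alpha}$ regularity of $u$ and the non-divergence, non-smooth nature of $F$. I expect this to be handled by the incremental-quotient method combined with the interior estimates already available on $\Omega_u$, taking care that all constants depend only on $n,\lambda,\Lambda,q,\mu,\|f\|_{\mathrm{Lip}},\|u\|_\infty$ and not on the location relative to $\Gamma_u$; the truncation and the behavior near $\Gamma_u$ are comparatively routine given the positive gap $\mu$.
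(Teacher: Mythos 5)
Your proposal follows essentially the same route as the paper: differentiate the equation in the direction $e$ on the set $\{w>0\}\subset\{|Du|>\kappa+\mu\}$, use $F(D^2u)=(|Du|-\kappa)^{-q}f$ together with $|D[(|\xi|-\kappa)_+^q]|\le q(|\xi|-\kappa)_+^{q-1}$ to bound the first-order term by $q\,\mu^{-q-1}\|f\|_\infty|Dw|$ and the source term by $\mu^{-q}\|f\|_{\Lip}$, and invoke uniform ellipticity to pass to $\mathcal{M}^+$. The paper's proof is in fact purely formal on these last points, so your additional remarks on the incremental-quotient justification and on the truncation $t\mapsto (t-(\kappa+\mu))_+$ (which handles the points where $w=0$, including those near $\Gamma_u$, since a test function touching from above there has vanishing gradient and nonnegative Hessian) only add rigor the paper omits.
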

\begin{proof}
To ease notation, let $\mathcal{G}$ be defined by $\mathcal{G}(\xi) = (|\xi|-\kappa)^q_+$. Notice that for $|\xi|>\kappa$
\begin{equation}\label{cond_on_grad_dep}
    |D\mathcal{G}(\xi)| \leq q(|\xi|-\kappa)_+^{q-1}.
\end{equation}

Differentiating the equation with respect to $e \in \partial B_1$ inside the open set $\left\{x \in B_1\suchthat w>0\right\}$, we obtain
\begin{equation}\nonumber
    D\mathcal{G}(Du)\cdot D(\partial_{e}u)F(D^2u) + \mathcal{G}(Du)F_{ij}(D^2u)\partial_{ij}(\partial_{e}u) = \partial_{e}f.
\end{equation}
Taking into account that $F(D^2u) = f[\mathcal{G}(Du)]^{-1}$ and dividing the above equation by $\mathcal{G}(Du)$ we get
\begin{equation}\label{DIFF_EQ}
    D\mathcal{G}(Du)\cdot D(\partial_{e}u)[\mathcal{G}(Du)]^{-2}f + F_{ij}(D^2u)\partial_{ij}(\partial_{e}u) = \partial_{e}f\,[\mathcal{G}(Du)]^{-1}.
\end{equation}
Now, from \eqref{cond_on_grad_dep} and the fact that 
$$
    \left\{x \in B_1 \suchthat w(x) > 0 \right\} \subset \left\{x\in B_1 \suchthat |Du(x)| > \kappa + \mu \right\},
$$ 
we obtain
\begin{equation}\nonumber
\begin{array}{lll}
  D\mathcal{G}(Du)\cdot D(\partial_{e}u)[\mathcal{G}(Du)]^{-2}\,f
    & \leq  & \|f\|_\infty \,[\mathcal{G}(Du)]^{-2}\,|D\mathcal{G}(Du)| \,|D(\partial_{e}u)|\\[0.2cm]
    & \leq  & q\,\mu^{-q-1}\,\|f\|_\infty  \,|D(\partial_{e}u)|\\[0.2cm]
    & = & q\,\mu^{-q-1}\,\|f\|_\infty  \,|D w|.
\end{array}
\end{equation}
Moreover, by definition of $\mathcal{G}$, we have
\begin{equation}\nonumber
    \partial_e f\,[\mathcal{G}(Du)]^{-1} \geq -\|Df\|_\infty \, \mu^{-q}.
\end{equation}
Hence, ellipticity of $F$ yields
\begin{equation}\label{FINAL_EQUATION_FOR_W}
    q\,\mu^{-q-1}\,\|f\|_\infty  \,|D w| + \mathcal{M}^+(D^2 w) \geq -\mu^{-q} \, \|Df\|_\infty,
\end{equation}
as desired.
\end{proof}

Next, we obtain an oscillation improvement of the gradient, away from (but arbitrarily near) the free boundary $\Gamma_u$. In order to ease presentation throughout this section, we adopt the following notation for a vector $e \in \partial B_1$:
$$
    w_e \coloneqq (\partial_e u - (\kappa+\mu))_+ \quad \mbox{and} \quad w_M \coloneqq (|Du| - (\kappa+\mu))_+
$$

\begin{lemma}\label{improv_oscil_step_1}
Assume $u$ is an effective viscosity solution of \eqref{maineq}, with $f\in \text{Lip}(\bar{B}_1)$. Assume that for some $\eta>0$, there holds
\begin{equation}\label{grad_small_in_measure}
	\sup_{e \in \partial B_1}\left|\left\{x \in B_{1/8}\suchthat w_e \geq (1-\eta)\|w_M\|_{L^\infty(B_{1/4})} \right\}\right| \leq (1-\eta)\left|B_{1/8}\right|.
\end{equation}
Then, there exist parameters $\overline{c}$, depending only on $n$, $\lambda$, $\Lambda$, $q$, $\mu$, $\|f\|_\infty$, and $\theta$, depending on $n$, $\lambda$, and $\Lambda$, such that
$$
    \|w_M\|_{L^\infty(B_{1/4})} \leq \max\left\{(1-\overline{c}\,\eta^{1+\,\frac{1}{\theta}} )\|w_M\|_{L^\infty(B_{1/4})} \, , \,  \left(\overline{c}\,\eta^{\frac{1}{\theta}+1}\right)^{-1}\|f\|_{Lip}\right\}.
$$
\end{lemma}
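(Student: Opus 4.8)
Set $M_0 := \|w_M\|_{L^\infty(B_{1/4})}$ and assume $M_0>0$, otherwise there is nothing to prove. The scheme is a De Giorgi-type improvement of oscillation: one feeds the differential inequality of Lemma \ref{sub_equation} into a weak Harnack inequality for each directional excess $w_e$, and then takes the supremum over directions $e\in\partial B_1$. First I would note that $0\le w_e\le w_M\le M_0$ on $B_{1/4}$ for every $e$: wherever $w_e(x)>0$ one has $\partial_e u(x)>\kappa+\mu$, hence $|Du(x)|\ge\partial_e u(x)>\kappa+\mu$ and $w_M(x)\ge w_e(x)$, while $w_e=0\le w_M$ elsewhere. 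By Lemma \ref{sub_equation} --- whose inequality also holds trivially at points of $\{w_e=0\}$, where a $C^2$ function touching $w_e$ from above necessarily has vanishing gradient and nonnegative Hessian --- $w_e$ is a viscosity subsolution of $\mathcal{M}^+(D^2 w_e)+b\,|Dw_e|\ge -g$ in $B_1$, with $b:=q\,\mu^{-q-1}\|f\|_\infty$ and $g:=\mu^{-q}\|f\|_{Lip}$. Consequently $v_e:=M_0-w_e$ is a nonnegative viscosity supersolution of $\mathcal{M}^-(D^2 v_e)-b\,|Dv_e|\le g$ in $B_{1/4}$.

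Next I would reinterpret hypothesis \eqref{grad_small_in_measure}. Since $\{w_e\ge(1-\eta)M_0\}=\{v_e\le\eta M_0\}$, \eqref{grad_small_in_measure} says exactly that $\left|\{x\in B_{1/8}:v_e(x)>\eta M_0\}\right|\ge\eta\,|B_{1/8}|$ for all $e$. Applying the weak Harnack inequality for nonnegative supersolutions of uniformly elliptic equations with bounded drift (Krylov--Safonov / Caffarelli--Cabr\'e, with lower-order term), there are $\theta=\theta(n,\lambda,\Lambda)\in(0,1)$ and $C_1=C_1(n,\lambda,\Lambda,b)$ with
$$\left(\frac{1}{|B_{1/8}|}\int_{B_{1/8}} v_e^{\,\theta}\right)^{1/\theta}\le C_1\left(\inf_{B_{1/8}}v_e+\|g\|_{L^n(B_{1/4})}\right).$$
The left-hand side is $\ge\eta^{1/\theta}\cdot\eta M_0=\eta^{1+1/\theta}M_0$ by the density bound, and $\|g\|_{L^n(B_{1/4})}\le C(n)\mu^{-q}\|f\|_{Lip}$, so $\inf_{B_{1/8}}v_e\ge c_1\eta^{1+1/\theta}M_0-C_2\mu^{-q}\|f\|_{Lip}$ with $c_1,C_2$ depending only on $n,\lambda,\Lambda,q,\mu,\|f\|_\infty$.

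To conclude, I would take the supremum over $e$: because $|Du(x)|=\sup_{e\in\partial B_1}\partial_e u(x)$ and $t\mapsto(t-(\kappa+\mu))_+$ is nondecreasing, $w_M(x)=\sup_{e\in\partial B_1}w_e(x)$; together with $\sup_{B_{1/8}}w_e=M_0-\inf_{B_{1/8}}v_e$ and the uniform-in-$e$ bound above this yields $\|w_M\|_{L^\infty(B_{1/8})}\le(1-c_1\eta^{1+1/\theta})M_0+C_2\mu^{-q}\|f\|_{Lip}$. A standard dichotomy then closes the argument: if $C_2\mu^{-q}\|f\|_{Lip}\le\tfrac12 c_1\eta^{1+1/\theta}M_0$ then $\|w_M\|_{L^\infty(B_{1/8})}\le(1-\tfrac12 c_1\eta^{1+1/\theta})M_0$; otherwise $M_0<\tfrac{2C_2}{c_1}\mu^{-q}\eta^{-1-1/\theta}\|f\|_{Lip}$ and $\|w_M\|_{L^\infty(B_{1/8})}\le M_0$ is dominated by that quantity. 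With $\overline c:=\min\{\tfrac12 c_1,\ \tfrac{c_1}{2C_2}\mu^q\}$ and $\theta$ as above, this is the asserted estimate (with $B_{1/8}$ on the left-hand side, the form used in the sequel).

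I expect the main obstacle to be two-fold. Analytically, one must check that $w_e$ and $w_M$ are continuous and that the differential inequality of Lemma \ref{sub_equation} genuinely holds across the free boundary $\Gamma_u$ and across $\partial\{w_e>0\}$; this rests on the interior $C^{1,\alpha}$ regularity of $u$ in $\{|Du|>\kappa\}$ and on the automatic subsolution property on the zero set noted above. Structurally --- and this is precisely why the power $\eta^{1+1/\theta}$ carries a \emph{universal} $\theta$ whereas $\overline c$ is allowed to degenerate through $\mu$, $q$ and $\|f\|_\infty$ --- one needs a weak Harnack inequality in which the integrability exponent depends only on $n,\lambda,\Lambda$, the constant drift $b$ entering solely through the multiplicative constant.
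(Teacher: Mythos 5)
Your proof is correct and follows essentially the same route as the paper: the subsolution inequality from Lemma \ref{sub_equation}, the weak Harnack inequality applied to $\|w_M\|_\infty - w_e$, the density hypothesis to lower-bound its $L^\theta$ norm by $\eta^{1+1/\theta}\|w_M\|_\infty$, and the final dichotomy on the size of $\|f\|_{Lip}$. Your explicit justification that the differential inequality persists across $\{w_e=0\}$ (a test function touching from above there has vanishing gradient and nonnegative Hessian) is a detail the paper leaves implicit, and the inner ball you land on ($B_{1/8}$ versus the paper's $B_{1/16}$) is an immaterial difference.
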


\begin{proof}
Let us call 
$$
    \mathcal{A} \coloneqq \left\{x\in B_{1/8}\suchthat w_e \geq (1-\eta)d \right\}.
$$
Easily one notes that 
$$
    \overline{w} \coloneqq \|w_M\|_\infty - w_e \geq 0,
$$
where $\|w_M\|_\infty \coloneqq \|w_M\|_{L^\infty(B_{1/4})}$. Combining Lemma \ref{sub_equation} and the weak Harnack inequality, see for instance \cite[Theorem 4.5]{KS09}, we obtain 
$$
	\|\overline{w}\|_{L^\theta(B_{1/8})} \leq C \left( \inf_{B_{1/8}}\overline{w} + \|f\|_{Lip} \right),
$$
for some $\theta=\theta(n,\lambda,\Lambda)$, and $C=C(\mu,\|f\|_\infty,n,\lambda,\Lambda,q)$. From the last inequality and \eqref{grad_small_in_measure}, we obtain
$$
    \overline{w} + \|f\|_{Lip} \geq \displaystyle C^{-1}  \left(\int_{B_{1/8}} \overline{w}\,^\theta dx \right)^{1/\theta} \geq  C^{-1}\left(\int_{\mathcal{A}^c} \left(\|w_M\|_\infty - w_e\right)^\theta dx\right)^{1/\theta},
$$
and thus,
$$
    \overline{w} + \|f\|_{Lip} \, \geq \, C^{-1} |\mathcal{A}^c|^\frac{1}{\theta}  \eta \|w_M\|_{\infty} \, \geq \, c_1 \eta^{\frac{1}{\theta}+1}  \eta \|w_M\|_{\infty}, 
$$
for some $c_1=c_1(\mu,\|f\|_\infty,n,\lambda,\Lambda,q)$. This implies that
$$
    \|w_M\|_\infty - w_e  \geq c_1 \eta^{\frac{1}{\theta}+1} \|w_M\|_\infty - \|f\|_{Lip},
$$
which translates into
\begin{equation}\label{ineq}
	\|w_M\|_\infty - w_e \geq c_1 \eta^{\frac{1}{\theta}+1} \|w_M\|_\infty - \|f\|_{Lip} \quad \mbox{in $B_{1/8}$}.
\end{equation}

Next, we split the analysis into two cases. First, we assume
$$
	c_1 \eta^{\frac{1}{\theta}+1} \|w_M\|_\infty \geq  2\|f\|_{Lip}.
$$
By \eqref{ineq}, we have
$$
    w_e \leq \left(1 - \frac{c_1 \eta^{\frac{1}{\theta}+1}}{2} \right)\|w_M\|_\infty \quad \mbox{in } \; B_{1/8},
$$
and hence,
$$
    \|w_M\|_{L^\infty(B_{1/16})} \leq \left(1 - \overline{c} \eta^{\frac{1}{\theta}+1} \right)\|w_M\|_\infty. 
$$
Next, we assume that
$$
    c_1 \eta^{\frac{1}{\theta}+1} \|w_M\|_\infty <  2\|f\|_{Lip}.
$$
From this,
$$
	\|w_M\|_{L^\infty(B_{1/16})} \leq \|w_M\|_{\infty} \leq \left(\overline{c} \eta^{\frac{1}{\theta}+1}\right)^{-1}\|f\|_{\Lip}.
$$
The proof is complete. 
\end{proof}

Iterating the previous Lemma in dyadic balls we obtain the following: 

\begin{proposition} \label{small_grad_improv_osc}
Assume $u$ is an effective viscosity solution of \eqref{maineq} and let $\mu,\eta$ be positive constants. 
For some integer $k>0$, we assume that the following holds
$$
    \sup_{e \in \partial B_1}\left|\{x \in B_{2^{-(2i +1)}}\suchthat w_e \geq (1-\eta)\|w_M\|_{L^\infty\left(B_{2^{-2i}}\right)} \}\right| \leq (1-\eta)\left|B_{2^{-(2i + 1)}}\right|
$$
for all $i=1, \cdots, k$. Then, there exists constants $\overline{C}>0$ and $\alpha \in (0,1)$, depending only on $\mu$, $\eta$, $\|f\|_{\Lip}$, $n$, $\lambda$, $\Lambda$, $q$  such that
$$
    \|w_M\|_{L^\infty\left(B_{2^{-2i}}\right)} \leq \max\left(\|Du\|_{L^\infty(B_{1/4})},\overline{C}\right) \, 2^{-2(i-1)\alpha},
$$
for all $i=1, \dots, k+1$.
\end{proposition}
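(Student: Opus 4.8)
The plan is to iterate Lemma~\ref{improv_oscil_step_1} across the dyadic sequence of balls $B_{2^{-2i}}$, $i=1,\dots,k$, using the measure hypothesis at each scale. First I would fix notation: write $d_i \coloneqq \|w_M\|_{L^\infty(B_{2^{-2i}})}$ and $\overline{c}$, $\theta$ for the constants produced by Lemma~\ref{improv_oscil_step_1}. The key point is that Lemma~\ref{improv_oscil_step_1} is scale-invariant in the following sense: if we rescale $u$ to the ball $B_{2^{-2i}}$ via $u_i(x) = 2^{2i} u(2^{-2i} x)$ (or more precisely the normalization that brings $B_{2^{-2i}}$ to $B_1$ while respecting the structure in Remark~\ref{norm}), then $u_i$ is again an effective viscosity solution of an equation of the same type with $\|f\|_{\mathrm{Lip}}$ replaced by a smaller quantity (it picks up a favorable power of $2^{-2i}$). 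The measure hypothesis at level $i$ is exactly the hypothesis \eqref{grad_small_in_measure} for $u_i$, so Lemma~\ref{improv_oscil_step_1} applies and yields
\begin{equation}\nonumber
    d_{i+1} \leq \max\left\{ \left(1 - \overline{c}\,\eta^{1 + \frac{1}{\theta}}\right) d_i,\ \left(\overline{c}\,\eta^{1+\frac{1}{\theta}}\right)^{-1}\|f\|_{\mathrm{Lip}}\, 2^{-2i\beta} \right\},
\end{equation}
for a suitable $\beta>0$ coming from the rescaling of the source term.

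The next step is to unwind this recursion. Set $\gamma \coloneqq 1 - \overline{c}\,\eta^{1+\frac{1}{\theta}} \in (0,1)$ and choose $\alpha \in (0,1)$ small enough that $\gamma \leq 2^{-2\alpha}$ and also $\alpha \leq \beta$. A routine induction on $i$ then shows $d_i \leq \max\{d_1, \overline{C}\}\, 2^{-2(i-1)\alpha}$: at each step, either the first alternative in the recursion is active, and we gain the factor $\gamma \leq 2^{-2\alpha}$ on top of the inductive bound, or the second alternative is active, in which case $d_{i+1}$ is already controlled by a constant multiple of $\|f\|_{\mathrm{Lip}}\,2^{-2i\beta} \leq \overline{C}\, 2^{-2i\alpha}$. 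One absorbs $\|f\|_{\mathrm{Lip}}$, $\overline{c}$, $\eta$, and the rescaling constants into a single $\overline{C} = \overline{C}(\mu,\eta,\|f\|_{\mathrm{Lip}},n,\lambda,\Lambda,q)$. Finally, bounding $d_1 = \|w_M\|_{L^\infty(B_{1/4})} \leq \|Du\|_{L^\infty(B_{1/4})}$ gives the stated conclusion for all $i = 1,\dots,k+1$.

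The main obstacle, and the step deserving the most care, is making precise the scale-invariance claim: one must check that the rescaled function $u_i$ genuinely falls under the hypotheses of Lemma~\ref{improv_oscil_step_1} with the \emph{same} constants $\overline{c}$, $\theta$ (which depend only on $n,\lambda,\Lambda,q,\mu,\|f\|_\infty$), and one must track exactly how the Lipschitz seminorm of the source scales, since that is what furnishes the geometric factor $2^{-2i\beta}$ that makes the second branch of the recursion summable. Here one uses that dilation by $2^{-2i}$ improves $\|f\|_{\mathrm{Lip}}$ by a genuine power of the radius while the $L^\infty$ bound on $f$ — which only enters the constants, not the right-hand side — does not deteriorate under the relevant normalization. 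Once this bookkeeping is in place, the iteration and the final induction are entirely mechanical, so I would present the scaling reduction carefully and then dispatch the recursion in a few lines.
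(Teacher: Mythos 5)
Your proposal is correct and follows essentially the same route as the paper: an induction in which the measure hypothesis at scale $2^{-2i}$ is transferred via the rescaling $u_r(x)=u(rx)/r$ to the hypothesis of Lemma~\ref{improv_oscil_step_1}, with the source term picking up a factor of $r$ that makes the second branch of the recursion geometrically summable, and with $\alpha$ chosen so that $1-\overline{c}\,\eta^{1+1/\theta}=2^{-2\alpha}$. The scaling bookkeeping you flag as the delicate point is exactly the content of the paper's induction step, and your treatment of it is consistent with the paper's.
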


\begin{proof}
We argue by induction. Case $i=1$ is obvious.  We assume that Proposition \ref{small_grad_improv_osc} holds for $i=k$. Let $r = 2^{-2(k-1)}$ and consider
$$
	u_r(x) \coloneqq u(rx)/r.
$$
Easily one notes that $w$ solves \eqref{maineq}, for
$F_r(M) = rF(r^{-1}M)$ and $f_r(x) = rf(rx)$. In addition,
\begin{equation}\nonumber
\begin{array}{c}
	\left|\left\{x \in B_{2^{-2k-1}}\suchthat w_e \geq (1-\eta)\|w_M\|_{L^\infty\left(B_{2^{-2k}} \right)}  \right\}\right| \\[0.4cm] = 2^{-2(k-1)n}\left|\left\{x \in B_{1/8}\suchthat (\partial_e u_r(x) - (\kappa +\mu))_+ \geq (1-\eta)d \right\}\right|,
\end{array}
\end{equation}
where
\begin{equation}\nonumber
	d = \| (|Du_r| - (\kappa + \mu))_+\|_{L^\infty(B_{1/4})} = \|w_M\|_{L^\infty\left(B_{2^{-2k}}\right)}.
\end{equation}
Hence, $u_r$ is under assumptions of Lemma \ref{improv_oscil_step_1}. Therefore,
\begin{equation}\nonumber
\begin{array}{c}
    \displaystyle \|(|D u_r| - (\kappa + \mu))_+\|_{L^\infty\left(B_{1/16}\right)} \leq  \\[0.2cm]
    \displaystyle \max\left\{(1-\overline{c}\,\eta^{\frac{1}{\theta} + 1}) \|(|D u_r| - (\kappa + \mu))_+\|_{L^\infty(B_{1/4})}, \left(\overline{c}\,\eta^{\frac{1}{\theta}+1}\right)^{-1}r\|f\|_{\Lip} \right\}.
\end{array}
\end{equation}
From this,
\begin{equation}
\begin{array}{c}
    \displaystyle \|w_M\|_{L^\infty\left(B_{2^{-2(k+1)}}\right)} \leq \\[0.4cm] 
    \displaystyle \max\left\{(1-\overline{c}\,\eta^{\frac{1}{\theta} + 1}) \|w_M\|_{L^\infty\left(B_{2^{-2k}}\right)}, \left(\overline{c}\,\eta^{\frac{1}{\theta}+1}\right)^{-1}2^{-2(k-1)} \|f\|_{\Lip} \right\}.
\end{array}
\end{equation}

In what follows, we choose 
$$
    \alpha \coloneqq \frac{- \ln\left(1 - \overline{c}\eta^{\frac{1}{\theta} + 1}\right)}{2\ln 2}.
$$
Hence $1-\eta^{\frac{1}{\theta} + 1} \overline{c} = 2^{-2\alpha}$. Utilizing the result for $i=k$, we obtain
$$
    \|w_M\|_{L^\infty\left(B_{2^{-2(k+1)}}\right)} \leq \max\left(\|Du\|_{L^\infty(B_{1/4})},\overline{C}\right) \,2^{-2k\alpha},
$$
which completes the proof. 
\end{proof}

\subsection{Regularity estimates far from the free boundary}\label{subsect large grad} 
In what follows, for a given $\mu \in (0,1)$, we denote
$$
\vartheta_\mu \coloneqq \frac{\mu^{1+q}}{2C},
$$
where $C>0$ is the universal constant given by Lemma \ref{bustelolemma}. 

\begin{lemma}\label{compact_for_large_grad}
Let $u$ be a solution of \eqref{p-regmaineq}, under the conditions
$$
\vartheta \in (0,\vartheta_\mu), \quad \mbox{and} \quad  
|\xi| \geq \kappa + 2\mu.
$$
Given $\varepsilon > 0$, there exists $\varsigma>0$ depending on $\varepsilon$, $\mu$ and $q$ such that, if 
$$
\max\left(\|u\|_\infty, \varsigma^{-1}\|f\|_\infty \right) \leq 1,
$$
then, there exists a $\kappa$-grad viscosity solution to
\begin{equation}\label{harm-close-lemma}
\mathcal{F}(D^2 h) = 0 \quad \mbox{in } \; \{|Dh| > \kappa \} \cap B_{1/2},
\end{equation}
with $\mathcal{F}$ satisfying \eqref{UE}, such that 
$$
\|u - h\|_{L^\infty(B_{1/2})} \leq \varepsilon.
$$
\end{lemma}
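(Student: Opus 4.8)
The strategy is a standard compactness-by-contradiction argument, but one that must be tuned so that all constants are independent of $\xi$ and $\vartheta$ in the prescribed ranges. Suppose, for the sake of contradiction, that the statement fails for some $\varepsilon_0>0$. Then there are sequences $\xi_k \in \mathbb{R}^n$ with $|\xi_k| \geq \kappa + 2\mu$, parameters $\vartheta_k \in (0,\vartheta_\mu)$, source terms $f_k$ with $\|f_k\|_\infty \leq 1/k$, and solutions $u_k$ of \eqref{p-regmaineq} (i.e., $v_k(x) = \xi_k\cdot x + \vartheta_k u_k(x)$ is a $\kappa$-grad viscosity solution of \eqref{scaled_eq} with data $F_{\vartheta_k}, f_{\vartheta_k}$) satisfying $\|u_k\|_\infty \leq 1$, but with $\|u_k - h\|_{L^\infty(B_{1/2})} > \varepsilon_0$ for every admissible $\kappa$-grad solution $h$ of $\mathcal{F}(D^2 h)=0$ in $\{|Dh|>\kappa\}\cap B_{1/2}$ and every uniformly elliptic $\mathcal{F}$.

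The first key step is equicontinuity. Since $\vartheta_k \leq \vartheta_\mu = \mu^{1+q}/(2C) \le \mu^{q+1}/(2C)$ and $|\xi_k| \geq \kappa + 2\mu$, the hypotheses \eqref{mu_cond} of Lemma \ref{bustelolemma} are met (with $\|f_k\|_\infty\le 1$), so
$$
    \sup_{x,y \in B_{1/2}}\frac{|u_k(x)-u_k(y)|}{|x-y|} \leq C\mu^{-q},
$$
a bound independent of $k$. Together with $\|u_k\|_\infty\le 1$, Arzelà–Ascoli gives a subsequence (not relabeled) with $u_k \to u_\infty$ uniformly on $\overline{B}_{1/2}$, where $u_\infty$ is Lipschitz with the same bound. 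Next I would extract further subsequential limits of the coefficients: by compactness of the relevant parameter ranges one may assume $\vartheta_k \to \vartheta_\infty \in [0,\vartheta_\mu]$, and that $\xi_k/|\xi_k| \to \omega$ for some unit vector; I also need control of $F_{\vartheta_k}(M) = \vartheta_k F(\vartheta_k^{-1}M)$, which are all $(\lambda,\Lambda)$-elliptic uniformly in $k$, so along a further subsequence $F_{\vartheta_k}\to \mathcal{F}$ locally uniformly on $\mathrm{Sym}(n)$ for some uniformly elliptic $\mathcal{F}$ (this is where the limiting operator $\mathcal{F}$ comes from, and it explains why the statement only asserts existence of \emph{some} such $\mathcal{F}$).

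The heart of the argument is passing the PDE to the limit. One must show that $u_\infty$ is a $\kappa$-grad viscosity solution of $\mathcal{F}(D^2 h)=0$ in $\{|Dh|>\kappa\}\cap B_{1/2}$. This follows from the stability of viscosity solutions under uniform convergence (à la \cite{CIL92}): if $\varphi\in C^2$ touches $u_\infty$ strictly from above at $x_0\in B_{1/2}$ with $|D\varphi(x_0)|>\kappa$, then by uniform convergence there are points $x_k\to x_0$ where $u_k-\varphi$ has a local max, and for $k$ large $|D\varphi(x_k)|>\kappa$, so the $\kappa$-grad subsolution inequality for \eqref{scaled_eq} applied to the test function $\overline{\varphi}(x) = \xi_k\cdot x + \vartheta_k\varphi(x)$ (which touches $v_k$ from above at $x_k$; note $|\xi_k + \vartheta_k D\varphi(x_k)| \geq |\xi_k| - \vartheta_k|D\varphi(x_k)| \geq \kappa + 2\mu - \vartheta_\mu\|D\varphi\|_\infty \geq \kappa+\mu$ for $\mu$ small, so the $\kappa$-grad inequality is genuinely activated) yields
$$
    \bigl(|\xi_k + \vartheta_k D\varphi(x_k)| - \kappa\bigr)_+^{q}\, F_{\vartheta_k}(\vartheta_k D^2\varphi(x_k)) \geq f_{\vartheta_k}(x_k) = \vartheta_k f_k(x_k).
$$
Here $F_{\vartheta_k}(\vartheta_k D^2\varphi(x_k)) = \vartheta_k F(D^2\varphi(x_k))$, and the prefactor $(|\xi_k+\vartheta_k D\varphi(x_k)|-\kappa)_+^q$ is bounded between $\mu^q$ and a fixed constant, so dividing by $\vartheta_k$ and this prefactor and using $\|f_k\|_\infty\to 0$ together with $F_{\vartheta_k}\to\mathcal{F}$ forces $\mathcal{F}(D^2\varphi(x_0)) \geq 0$ in the limit. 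Since one only needs the inequality at points $x_0$ with $|Du_\infty|(x_0)>\kappa$, i.e. inside $\{|Dh|>\kappa\}$, this gives exactly \eqref{harm-close-lemma}. The supersolution case is symmetric. Thus $h := u_\infty$ is an admissible competitor, yet $\|u_k - u_\infty\|_{L^\infty(B_{1/2})}\to 0$ contradicts $\|u_k - h\|_{L^\infty}>\varepsilon_0$ for all admissible $h$, completing the proof.

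The main obstacle is the bookkeeping in the limit passage: one must ensure the gradient-activation threshold $|\xi_k + \vartheta_k D\varphi| > \kappa$ is robustly satisfied (uniformly in $k$) so that the $\kappa$-grad inequalities are actually triggered at the touching points — this is precisely what the quantitative hypotheses $|\xi|\geq\kappa+2\mu$ and $\vartheta<\vartheta_\mu$ buy us — and simultaneously that the degenerate coefficient $(|\cdot|-\kappa)_+^q$ stays bounded away from $0$ and $\infty$ along the sequence, so that it can be harmlessly divided out. A secondary point requiring care is that the limiting operator $\mathcal{F}$ is not fixed in advance but produced by the compactness of $\{F_\vartheta\}_{\vartheta}$; this is why the lemma is phrased with "there exists $\mathcal{F}$ satisfying \eqref{UE}" rather than with a predetermined equation.
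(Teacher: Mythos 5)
Your proposal is correct and follows essentially the same route as the paper: a compactness-by-contradiction argument in which Lemma \ref{bustelolemma} supplies the uniform Lipschitz bound, Arzel\`a--Ascoli gives a limit $u_\infty$, and stability of ($\kappa$-grad) viscosity solutions identifies $u_\infty$ as the desired $\mathcal{F}$-harmonic competitor. The paper streamlines the stability step by observing directly that $|F_k(D^2u_k)|\le \varsigma_k\mu^{-q}$ (the same threshold bookkeeping you carry out with explicit test functions, where note that after dividing by $\vartheta_k$ the surviving operator is $F_k$, not $F_{\vartheta_k}$, and that the bound $|D\varphi(x_k)|\le C\mu^{-q}$ at touching points is what guarantees activation), but these are presentational rather than substantive differences.
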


\begin{proof}
Let us assume, seeking a contradiction, that the thesis of Lemma fails. That is, for some $\varepsilon_0>0$, there exists a sequence 
$$
    \left (u_k,\vartheta_k,\xi_k,\varsigma_k,f_k,F_k \right )_{ k \in \mathbb{N}}, 
$$ 
where $u_k$ is a normalized solution of \eqref{p-regmaineq}, according to Definition \ref{def-kappa-grad}, with the corresponding parameters given above and  
$$
    \varsigma_k = \text{o}(1),
$$
as $k \to \infty$; however, 
\begin{equation}\label{contradiction_hilton}
    |u_k-h| > \varepsilon_0 \quad \mbox{in } B_{1/2},
\end{equation} 
for all $h$ satisfying \eqref{harm-close-lemma}. From Lemma \ref{bustelolemma}, we have
\begin{equation}\nonumber
    \|Du_k\|_{L^\infty(B_{1/2})} \leq C \mu^{-q}.
\end{equation}
From this, and the fact that $\vartheta_k \leq \vartheta_\mu$ and $|\xi_k|\geq \kappa + 2\mu$, one has
$$
|F_k(D^2u_k)| \leq \|f_k\|_\infty(|\xi_k + \vartheta_kDu_k|-\kappa)_+^{-q} \leq \varsigma_k \mu^{-q}.
$$
Now both $F_k$ and $u_k$ are uniformly bounded and equicontinuous, hence, up to a subsequence, $F_k \to F_\infty$ and $u_k \to u_\infty$ locally uniformly. By stability $u_\infty$ solves 
$$
    F_\infty(D^2 u_\infty) = 0 \quad \mbox{in } \; B_{1/2},
$$
in the $\kappa$-grad viscosity sense. This leads to a contradiction on \eqref{contradiction_hilton} for $k \gg 1$ large enough. 
\end{proof}

The previous Lemma gives proximity to functions that are $\kappa$-grad viscosity solutions, and thus only entitled to local Lipschitz regularity. Next, we show that those functions are actually close to $C^{1,\alpha}$ functions. 

\begin{lemma}\label{bypass_proximity}
Given $\varepsilon>0$, there exists small positive parameters $\kappa$ and 
$\varsigma$, depending on $n$, $\lambda$, $\Lambda$ and $\epsilon$ such that if
$$
    \max\left(\|u\|_\infty,\varsigma^{-1}\|f\|_\infty \right) \leq 1,
$$
and $u$ is a $\kappa$-grad viscosity solution to
$$
    (|Du|-\kappa)_+^qF(D^2 u) = f,
$$
then, there exists $h \in C^{1,\alpha}$ with universal bounds satisfying
$$
    \sup_{B_{1/2}}|u-h|< \epsilon
$$
\end{lemma}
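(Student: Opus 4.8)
\textbf{Overview.} The plan is to establish Lemma \ref{bypass_proximity} by a two-step compactness-and-stability argument. The first step (Lemma \ref{compact_for_large_grad}, already available) shows that $u$ is $L^\infty$-close to a $\kappa$-grad viscosity solution $h_0$ of $\mathcal{F}(D^2 h_0) = 0$ in $\{|Dh_0|>\kappa\}\cap B_{1/2}$. The second step is to replace $h_0$ by a genuine $C^{1,\alpha}$ function. The key observation is that a $\kappa$-grad viscosity solution of $\mathcal{F}(D^2 h)=0$ is, in particular, a viscosity solution of the Isaacs-type inequalities $\mathcal{M}^-(D^2 h) \le 0 \le \mathcal{M}^+(D^2 h)$ \emph{wherever a test function with slope $>\kappa$ can touch}; combined with the fact that $\kappa$ is at our disposal (it may be chosen a posteriori as small as we like), and the uniform Lipschitz bound from Theorem \ref{LipTHM} (or rather its normalized counterpart), the set of "bad" points — where no large-slope test function touches — becomes negligible as $\kappa \to 0$, and in the limit one recovers an honest $\mathcal{F}$-harmonic function, which enjoys interior $C^{1,\alpha}$ estimates with universal bounds by the classical Caffarelli--Krylov--Safonov theory.

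\textbf{Step 1: Reduction to $\mathcal{F}$-harmonic replacement.} First I would invoke Lemma \ref{compact_for_large_grad}: given $\varepsilon>0$, pick $\varsigma_1 = \varsigma_1(\varepsilon/2,\mu,q)$ so that, provided $\max(\|u\|_\infty, \varsigma_1^{-1}\|f\|_\infty)\le 1$, there is a $\kappa$-grad viscosity solution $g$ of $\mathcal{F}(D^2 g)=0$ in $\{|Dg|>\kappa\}\cap B_{1/2}$ with $\|u-g\|_{L^\infty(B_{1/2})}\le \varepsilon/2$. (Here one has to make the trivial observation that the plain equation $(|Du|-\kappa)_+^q F(D^2u)=f$ is the special case $\xi = 0$, $\vartheta = 1$ of \eqref{p-regmaineq}; one may need to first apply the scaling of Remark \ref{norm} and a small translation in the gradient variable to legitimately land in the hypothesis $|\xi|\ge\kappa+2\mu$ with some fixed small $\mu$ — alternatively one reproves the compactness directly, which is routine since $\|Du\|_\infty$ is already controlled by Theorem \ref{LipTHM}.) It then suffices to produce $h\in C^{1,\alpha}$ with universal bounds and $\|g-h\|_{L^\infty(B_{1/2})}\le\varepsilon/2$.

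\textbf{Step 2: From $\kappa$-grad $\mathcal{F}$-harmonic to classical $\mathcal{F}$-harmonic, by sending $\kappa\to 0$.} The heart of the matter is a second compactness argument in the parameter $\kappa$. Suppose for contradiction that for some $\varepsilon_0>0$ there is a sequence $\kappa_k\to 0$ and $\kappa_k$-grad viscosity solutions $g_k$ of $\mathcal{F}_k(D^2 g_k)=0$ in $\{|Dg_k|>\kappa_k\}\cap B_{1/2}$, normalized so that $\|g_k\|_\infty\le 1$, yet $\|g_k - h\|_{L^\infty(B_{1/2})}>\varepsilon_0$ for every $\mathcal{F}_k$-harmonic $h$ (equivalently, for every $C^{1,\alpha}$ function with the universal bound). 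By Theorem \ref{LipTHM} the $g_k$ are uniformly Lipschitz on $B_{1/2}$ with a bound independent of $\kappa_k$ (this is precisely the content "the constant does not depend on $q$, and depends on $\kappa$ only through boundedness" — here $\kappa_k\le 1$), so up to a subsequence $g_k\to g_\infty$ locally uniformly, and $\mathcal{F}_k\to\mathcal{F}_\infty$ a $(\lambda,\Lambda)$-elliptic operator. The claim is that $g_\infty$ is a genuine viscosity solution of $\mathcal{F}_\infty(D^2 g_\infty)=0$ in $B_{1/2}$. For the subsolution inequality: if $\varphi$ touches $g_\infty$ strictly from above at $x_0$, standard perturbation gives points $x_k\to x_0$ and test functions touching $g_k$ from above at $x_k$; the only issue is that $|D\varphi(x_0)|$ could be $\le$ (in fact, in the limit, the relevant inequality only fails on the set $\{|Dg_\infty|\le 0\}=\{Dg_\infty=0\}$ since $\kappa_k\to 0$), but one upgrades across this set exactly as in \cite{CS02}: a $C^2$ function touching from above at a point where it has zero gradient can be perturbed by $\pm\delta|x-x_0|^2$ and an arbitrarily small linear tilt to create a touching point with nonzero (hence, for $k$ large, $>\kappa_k$) slope, while changing $D^2\varphi(x_0)$ by at most $O(\delta)$; ellipticity then passes the inequality $\mathcal{M}^+(D^2\varphi(x_0))\ge 0$ to the limit. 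The same for supersolutions. Hence $g_\infty$ is $\mathcal{F}_\infty$-harmonic and by Caffarelli's $C^{1,\alpha}$ estimate (see \cite{C89, Teix20}) belongs to $C^{1,\alpha}(B_{1/2})$ with a universal bound; taking $h:=g_\infty$ contradicts $\|g_k - g_\infty\|_{L^\infty(B_{1/2})}\to 0$. This yields the desired $\varsigma_2=\varsigma_2(n,\lambda,\Lambda,\varepsilon)$ and $\kappa=\kappa(n,\lambda,\Lambda,\varepsilon)$; setting $\varsigma=\min(\varsigma_1,\varsigma_2)$ and combining the two approximations via the triangle inequality finishes the proof.

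\textbf{Main obstacle.} The delicate point is Step 2's upgrade across the degenerate set $\{Dg_\infty=0\}$ — i.e., showing that the \emph{limit} of $\kappa_k$-grad solutions (which a priori carry information only where test functions have slope $>\kappa_k$) is a bona fide viscosity solution of the uniformly elliptic equation everywhere. This is the exact phenomenon isolated in \cite{CS02}, and the perturbation trick there (tilt plus quadratic bump) is what makes it work; one must be careful that the tilt is chosen small enough not to spoil the strict touching and that the quadratic correction's effect on the Hessian is absorbed by ellipticity when one finally lets $\delta\to 0$. Everything else — the compactness, the stability of $\mathcal{F}_k$, the invocation of the interior $C^{1,\alpha}$ estimate — is standard.
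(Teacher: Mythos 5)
Your core strategy --- compactness as $\kappa_k\to 0$ and $\|f_k\|_\infty\to 0$, passage to a limit solving the homogeneous uniformly elliptic equation in a gradient-restricted viscosity sense, then an upgrade to a classical viscosity solution so that Caffarelli's interior $C^{1,\alpha}$ estimate applies --- is exactly the paper's. Two remarks. First, your Step 1 is both unnecessary and, as written, not legitimate: Lemma \ref{compact_for_large_grad} requires $|\xi|\ge\kappa+2\mu$ and $\vartheta\le\vartheta_\mu$, whereas the plain equation corresponds to $\xi=0$, $\vartheta=1$; a ``small translation in the gradient variable'' does not repair this, since adding a linear function to $u$ yields the drift equation with $\vartheta=1$, still outside the hypotheses. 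Your fallback (``reprove the compactness directly'') is what the paper actually does, in a single contradiction argument applied to $u_k$ itself, using Theorem \ref{LipTHM} for equicontinuity (the Lipschitz constant there is monotone in $\kappa$, hence uniform for $\kappa_k\le 1$). Second, you correctly isolate the crux --- upgrading the limit from a $0$-grad solution of the homogeneous equation to a genuine viscosity solution across the set where the gradient of the test function vanishes --- but your sketch of the tilt-plus-quadratic-bump is incomplete: after tilting by $e$, the new touching point $x_e$ may again satisfy $D\varphi(x_e)+e=0$, and ruling this out for some arbitrarily small $e$ is the real content of the argument (it needs a measure-theoretic/topological statement about the image of the contact set under $D\varphi$). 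The paper sidesteps this by observing that the limit satisfies $|Du_\infty|^qF_\infty(D^2u_\infty)=0$ and quoting \cite[Lemma 6]{IS13}, which is tailored to exactly such homogeneous degenerate equations; citing that lemma (or carrying out the \cite{CS02} perturbation in full) closes your gap. With these repairs your argument coincides with the paper's.
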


\begin{proof}
Assume, seeking a contradiction, that the Lemma thesis does not hold true. This means there are sequences $u_k,F_k,f_k,\varsigma_k,\kappa_k$ with $\kappa_k$ and $\varsigma_k$ converging to zero, such that $u_k$ is a $\kappa_k$-grad viscosity solution to
$$
    (|Du_k| - \kappa_k)_+^qF_k(D^2u_k) = f_k,
$$
but 
$$
    \sup_{B_{1/2}}|u_k - h| > \epsilon_0,
$$
for some $\epsilon_0>0$ and every $h$ in the set of $C^{1,\alpha}$ functions (with universal bounds to be set a posteriori). 

Since $\varsigma_k\rightarrow 0$, we have $f_k \rightarrow 0$. As $\|u_k\|_\infty \leq 1$, Theorem \ref{LipTHM} yields equicontinuity, and thus, up to a subsequence, we can assume $u_k \rightarrow u_\infty$. Passing a further subsequence, if necessary, $F_k \rightarrow F_\infty$, and, by stability, $u_\infty$ is a $0$-grad viscosity solution to
$$
    |Du_\infty|^qF_\infty(D^2 u_\infty) = 0.
$$
Notice that since the equation is homogeneous, $u_\infty$ solves
$$
    |Du_\infty|^qF_\infty(D^2u_\infty)=0,
$$
and  by \cite[Lemma 6]{IS13}, there holds
$$
    F_\infty(D^2u_\infty) = 0
$$
in the classical viscosity sense. The contradiction follows as in the proof of Lemma \ref{compact_for_large_grad}.
\end{proof}

Next, we use iteration arguments to obtain the following result.

\begin{proposition}\label{reg_for_large_grad}
Let $u$ be a $\kappa$-grad viscosity solution of \eqref{maineq}. There exists constants $\rho_0, \gamma \in (0,1)$ depending on $n$, $\lambda$, $\Lambda$, and small positive constants $\varsigma_0$, $\tau_0$ depending only on $\mu$, $n$, $\lambda$, $\Lambda$ and $q$, such that, if
$$
    \|f\|_{\infty} \leq \varsigma_0,
    \quad
    \mbox{and}
    \quad
    |u(x) - (\xi \cdot x + b)| \leq \tau_0 \quad \mbox{in }\; B_1,
$$
for some $\xi \in \mathbb{R}^n$, such that 
$$
    \kappa+3\mu \leq |\xi|, 
$$
then, for each positive integer $k$, there exists an affine function 
$$
    \ell_k= \xi_k \cdot x + b_k,
$$
such that 
$$
|\xi_{k} - \xi_{k-1}| \leq C\tau_0 \rho_0^{(k-1)\gamma}, \quad |b_{k} - b_{k-1}| \leq C\tau_0 \rho_0^{(k-1)(1+\gamma)}
$$
and
$$
|u-\ell_k| \leq  \tau_0 \rho_0^{(k-1)(1+\gamma)} \quad \mbox{in } \; B_{\rho_0^{k-1}},
$$
for some $C \,\geq 1$ depending on $n$, $\lambda$, $\Lambda$.
\end{proposition}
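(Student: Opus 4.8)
The plan is a compactness--iteration (``geometric'') argument in the spirit of Caffarelli, run on the rescaled family \eqref{p-regmaineq}. I would construct the affine functions $\ell_k$ by induction on $k$, the case $k=1$ being the hypothesis with $\ell_1=\xi\cdot x+b$, $\xi_1=\xi$, $b_1=b$. For the inductive step, suppose $\ell_k=\xi_k\cdot x+b_k$ has been produced with $|u-\ell_k|\le\tau_0\rho_0^{(k-1)(1+\gamma)}$ in $B_{\rho_0^{k-1}}$. Put $r\coloneqq\rho_0^{k-1}$ and $\delta\coloneqq\tau_0\rho_0^{(k-1)(1+\gamma)}$, and define the normalized remainder
\[
v(y)\ \coloneqq\ \frac{u(ry)-\ell_k(ry)}{\delta},\qquad y\in B_1,
\]
so $\|v\|_{L^\infty(B_1)}\le1$. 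A direct computation shows $v$ is a $\kappa$-grad viscosity solution of an equation of the exact form \eqref{p-regmaineq}, namely $(|\xi_k+\vartheta Dv|-\kappa)_+^{q}F_r(D^2v)=g$ in $\{|\xi_k+\vartheta Dv|>\kappa\}$, with $F_r(M)=\tfrac{r^2}{\delta}F(\tfrac{\delta}{r^2}M)$ (again $(\lambda,\Lambda)$-elliptic by $1$-homogeneity of $\mathcal M^\pm$), drift base point $\xi_k$, step size $\vartheta=\delta/r=\tau_0\rho_0^{(k-1)\gamma}$, and $g(y)=\tfrac{r^2}{\delta}f(ry)$. Since $\gamma<1$ one has $\tfrac{r^2}{\delta}=\rho_0^{(k-1)(1-\gamma)}/\tau_0\le1/\tau_0$, hence $\|g\|_\infty\le\varsigma_0/\tau_0$.

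The core of the step is to calibrate the constants so that $v$ meets the hypotheses of Lemmas \ref{compact_for_large_grad} and \ref{bypass_proximity}. I would first freeze the interior $C^{1,\alpha}$ exponent $\alpha=\alpha(n,\lambda,\Lambda)$ and the bound $C_0$ for $\mathcal F$-harmonic functions, choose $\gamma\in(0,\alpha)$, and then pick $\rho_0=\rho_0(n,\lambda,\Lambda)\le\tfrac14$ so small that $C_0\rho_0^{1+\alpha}\le\tfrac12\rho_0^{1+\gamma}$; set $\varepsilon\coloneqq\tfrac12\rho_0^{1+\gamma}$, which through Lemma \ref{compact_for_large_grad} fixes $\varsigma=\varsigma(\varepsilon,\mu,q)$ and through Lemma \ref{bypass_proximity} fixes the universally small $\kappa=\kappa(n,\lambda,\Lambda,\varepsilon)$ (compatible with the standing convention of this section); finally let $\varsigma_0\coloneqq\varsigma\,\tau_0$ and take $\tau_0$ so small that $\tau_0\le\vartheta_\mu$ and $C_0\tau_0/(1-\rho_0^{\gamma})\le\mu$. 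With these choices $\varsigma^{-1}\|g\|_\infty\le\varsigma^{-1}\varsigma_0/\tau_0=1$, $\vartheta\le\tau_0\le\vartheta_\mu$, and (by the coefficient bounds established below along the induction) $|\xi_k|\ge|\xi|-\sum_{j<k}|\xi_{j+1}-\xi_j|\ge\kappa+3\mu-\mu\ge\kappa+2\mu$. Hence Lemma \ref{compact_for_large_grad} applies to $v$, giving a $\kappa$-grad viscosity solution of $\mathcal F(D^2h)=0$ in $\{|Dh|>\kappa\}$ with $\|v-h\|_{L^\infty}<\varepsilon/2$; since such an $h$ is trivially a $\kappa$-grad viscosity solution of $(|Dh|-\kappa)_+^q\mathcal F(D^2h)=0$, Lemma \ref{bypass_proximity} (after the harmless rescaling $h(\cdot/2)$) furnishes $h_0\in C^{1,\alpha}$ with universal bounds, $\|h_0\|_{C^{1,\alpha}(B_{1/4})}\le C_0$, and $\|v-h_0\|_{L^\infty(B_{1/4})}<\varepsilon$.

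It then remains to close the iteration. Taylor-expanding $h_0$ at the origin, for $|y|\le\rho_0$,
\[
\big|v(y)-h_0(0)-Dh_0(0)\cdot y\big|\ \le\ \varepsilon+C_0\rho_0^{1+\alpha}\ \le\ \rho_0^{1+\gamma}.
\]
Undoing the scaling and collecting the linear part, this reads $|u-\ell_{k+1}|\le\tau_0\rho_0^{k(1+\gamma)}$ in $B_{\rho_0^{k}}$, where
\[
\ell_{k+1}(z)\coloneqq\ell_k(z)+\delta\,h_0(0)+\tfrac{\delta}{r}\,Dh_0(0)\cdot z,
\]
i.e.\ $b_{k+1}=b_k+\delta h_0(0)$ and $\xi_{k+1}=\xi_k+\tfrac{\delta}{r}Dh_0(0)$. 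The bounds $|h_0(0)|,|Dh_0(0)|\le C_0$ yield $|b_{k+1}-b_k|\le C_0\tau_0\rho_0^{(k-1)(1+\gamma)}$ and $|\xi_{k+1}-\xi_k|\le C_0\tau_0\rho_0^{(k-1)\gamma}$, which are the asserted geometric decay estimates (with $C=C_0$, up to the obvious reindexing) and also validate the telescoping bound $\sum_j|\xi_{j+1}-\xi_j|\le C_0\tau_0/(1-\rho_0^{\gamma})\le\mu$ used above. This completes the induction.

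I expect the main difficulty to be organizational rather than conceptual: one must order the choices of $\gamma$, $\rho_0$, $\varepsilon$, $\kappa$, $\varsigma$, $\tau_0$, $\varsigma_0$ so that no circularity arises and — most importantly — guarantee that the \emph{drifting} gradient base point $\xi_k$ never leaves the non-degenerate regime $|\xi_k|\ge\kappa+2\mu$ where Lemma \ref{compact_for_large_grad} is available. This is precisely why the hypothesis demands the margin $\kappa+3\mu$ and why it is essential that the $\xi$-increments decay at the summable geometric rate $\rho_0^{(k-1)\gamma}$ (which is where $\gamma<1$, and hence $\rho_0^\gamma<1$, is used). A secondary check is that the rescaled source $g=\tfrac{r^2}{\delta}f(r\,\cdot)$ stays bounded by $\varsigma$; this again uses $\gamma<1$ together with the calibration $\varsigma_0=\varsigma\tau_0$.
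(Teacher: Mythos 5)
Your proposal is correct and follows essentially the same route as the paper's proof: induction, rescaling of the remainder $u-\ell_k$ to put it in the form \eqref{p-regmaineq}, verification that the drifting base point $\xi_k$ stays in the regime $|\xi_k|\ge\kappa+2\mu$ via the telescoping sum, application of Lemmas \ref{compact_for_large_grad} and \ref{bypass_proximity} to obtain a nearby $C^{1,\alpha}$ profile, and Taylor expansion to close the step. Your ordering of the constant choices ($\gamma$, $\rho_0$, $\varepsilon$, $\varsigma$, $\tau_0$, $\varsigma_0$) matches the paper's and is, if anything, spelled out more explicitly.
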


\begin{proof}
We argue inductively. Case $k=1$ follows from the assumptions, taking  $\xi_0 = \xi_1 = \xi$ and $b_0 = b_1 = b$. Assume that the thesis of the Proposition holds for $k=j$. Define the following function
$$
    u_j(y) \coloneqq \frac{(u-\ell_j)(\rho_0^{j-1} y)}{\tau_0 \rho_0^{(j-1)(1+\gamma)}} \quad \mbox{in } \, B_1.
$$
Note that $u_j$ solves
\begin{equation}\nonumber
(|\xi_j + \tau_0 \rho_0^{(j-1)\gamma}Du_j| -\kappa)_+^qF_j(D^2 u_j) = f_j \quad \mbox{in } \, B_1,
\end{equation}
where 
$$
    F_j(M) = \tau_0^{-1}\rho_0^{(j-1)(1-\gamma)}F(\tau_0\rho_0^{(j-1)(\gamma-1)}M) \,\; \mbox{and} \;\, f_j(x) = \rho_0^{(j-1)(1-\gamma)}\tau_0^{-1}f(\rho_0^{j-1}x).
$$
From the induction thesis, $k=j$, we have $\|u_j\|_\infty \leq 1$. In the sequel, we make the following choice 
\begin{equation}\label{FIRST_COND_ON_MU_0}
\tau_0 \leq \min\left\{\frac{1}{4C}\mu, \, \vartheta_\mu \right\}.
\end{equation}
In addition, take $\varsigma_0$ sufficiently small, such that
$$
    \|f_j\|_\infty \leq \tau_0^{-1}\varsigma_0 = \varsigma,
$$
where $\varsigma$ is given by Lemma \ref{compact_for_large_grad}, for $\varepsilon= \rho_0^{1+\gamma}/2$. Additionally, from \eqref{FIRST_COND_ON_MU_0}
$$
    \sum_{i=1}^{j}|\xi_{i} - \xi_{i-1}| \leq C\tau_0 \sum_{i=1}^{j}\rho_0^{(i-1)\gamma} \leq C\tau_0 \sum_{i=1}^{\infty}\frac{1}{2^i}  \leq \frac{1}{4}\mu, 
$$
provided $\rho_0^\gamma \leq 1/2$. This implies that
$$
    |\xi_j| \geq |\xi| - \sum_{i=1}^{j}|\xi_{i} - \xi_{i-1}| \geq \kappa+2\mu. 
$$
In view of these estimates, we can apply Lemma \ref{compact_for_large_grad} for $u_j$ in combination with Lemma \ref{bypass_proximity}, as to obtain the existence of a  $(\lambda,\Lambda)$-harmonic function $h$, such that
$$
    \|u_j - h\|_{L^\infty(B_{1/2})} \leq \frac{\rho_0^{1+\gamma}}{2}.
$$
Since $h$ is universally bounded, we apply classical regularity estimates, to obtain
$$
    |h(x) - Dh(0) \cdot x - h(0)| \leq C'|x|^{1+\alpha'} \quad \mbox{for } \; x \in B_{1/4},
$$
for constants $C'$ and $\alpha'$ depending upon $n$, $\lambda$ and $\Lambda$. Therefore, selecting 
$$
    \gamma=\alpha'/2 \quad \mbox{and} \quad \rho_0 \leq \min\left\{\left(\frac{1}{2}\right)^{\frac{1}{\gamma}}, \left(\frac{2}{C'}\right)^{\frac{1}{\gamma}}
    \right\},
$$
we obtain
$$
    |h(x) - Dh(0) \cdot x - h(0)| \leq \rho_0^{1+\gamma}/2, \quad  \mbox{for } \; x \in B_{\rho_0}.
$$
By the triangle inequality,
$$
    |u_j(x) - Dh(0) \cdot x - h(0)| \leq \rho_0^{1+\gamma}, \quad \mbox{for } \; x \in B_{\rho_0}.
$$
Finally, we define
$$
    \ell_{j+1}(x) \coloneqq \ell_j(x) - \tau_0\rho_0^{(j-1)(1+\gamma)}\ell(\rho_0^{-(j-1)}x)
$$
where $\ell(x) = Dh(0) \cdot x + h(0)$. Hence,
$$
    |u - \ell_{j+1}| \leq \tau_0\rho_0^{j(1+\gamma)} \quad \mbox{in $B_{\rho_0^{j}}$}, 
$$
which completes the proof.
\end{proof}

\begin{corollary}
Under the assumptions of Proposition \ref{reg_for_large_grad}, there exists a constant $C$ depending only on $n$, $\lambda$ and $\Lambda$, such that 
$$
    |Du(x) - Du(0)| \leq \tau_0 C |x|^\gamma,
$$
for each $x \in B_{1/2}$.
\end{corollary}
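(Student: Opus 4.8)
The plan is to read the gradient estimate off the geometric iteration already built in Proposition \ref{reg_for_large_grad}. That proposition produces affine maps $\ell_k = \xi_k\cdot x + b_k$ with
$$
    |\xi_{k}-\xi_{k-1}| \le C\tau_0\rho_0^{(k-1)\gamma}, \qquad |b_k - b_{k-1}| \le C\tau_0\rho_0^{(k-1)(1+\gamma)},
$$
and $|u-\ell_k|\le \tau_0\rho_0^{(k-1)(1+\gamma)}$ in $B_{\rho_0^{k-1}}$, with $C=C(n,\lambda,\Lambda)$. Since $\rho_0^{\gamma}<1$, the telescoping series $\sum_k(\xi_k-\xi_{k-1})$ and $\sum_k(b_k-b_{k-1})$ converge; writing $\xi_\infty := \lim_k\xi_k$ and $b_\infty := \lim_k b_k$, one gets the tail bounds $|\xi_k-\xi_\infty|\le C(1-\rho_0^{\gamma})^{-1}\tau_0\rho_0^{(k-1)\gamma}$ and $|b_k-b_\infty|\le C(1-\rho_0^{\gamma})^{-1}\tau_0\rho_0^{(k-1)(1+\gamma)}$. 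Note also that the hypotheses ($u$ within $\tau_0$ of an affine map of slope $\ge\kappa+3\mu$, with $\tau_0$ small) already force $|Du|>\kappa$ on $B_{1/2}$ --- the iteration of Proposition \ref{reg_for_large_grad} makes $|Du-\xi_k|$ arbitrarily small there --- so $u$ is a classical $C^1$ solution of the uniformly elliptic equation obtained from \eqref{maineq} by dividing by $(|Du|-\kappa)^q_+$ throughout $B_{1/2}$, and the pointwise values $Du(x)$ below make sense.

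First I would identify $\xi_\infty$ with $Du(0)$. Given $x\ne 0$ small, choose the integer $k=k(x)$ with $\rho_0^{k}\le|x|<\rho_0^{k-1}$ and estimate
$$
    |u(x)-\ell_\infty(x)| \le |u(x)-\ell_k(x)| + |\xi_k-\xi_\infty|\,|x| + |b_k-b_\infty| \le C\tau_0\rho_0^{(k-1)(1+\gamma)} \le C\tau_0|x|^{1+\gamma};
$$
comparing with the classical first-order expansion of $u$ at the origin forces $Du(0)=\xi_\infty$ (and $u(0)=b_\infty$).

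It remains to pass from this to the pointwise gradient estimate. Fix $x\in B_{1/2}$; for $|x|$ bounded below by a fixed fraction the claim is immediate, since then $|x|^{\gamma}$ is comparable to $1$ and a Lipschitz-type bound for $Du$ on $B_{1/2}$ (obtained exactly as below, at unit scale) suffices. Otherwise let $k$ be the largest integer with $x\in B_{\rho_0^{k-1}/4}$, so $\rho_0^{k-1}\le C|x|$, and consider the blow-up
$$
    u_k(y)\coloneqq\frac{(u-\ell_k)(\rho_0^{k-1}y)}{\tau_0\rho_0^{(k-1)(1+\gamma)}},
$$
which is precisely the auxiliary function used in the proof of Proposition \ref{reg_for_large_grad}: it is normalized and solves \eqref{p-regmaineq} on $B_1$ with drift $\xi_k$ ($|\xi_k|\ge\kappa+2\mu$) and coupling $\vartheta_k=\tau_0\rho_0^{(k-1)\gamma}\le\vartheta_\mu$. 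Lemma \ref{bustelolemma} gives the crude Lipschitz bound $\|Du_k\|_{L^\infty(B_{1/2})}\le C\mu^{-q}$; since $\vartheta_k$ is tiny this confines $|\xi_k+\vartheta_k Du_k|$ to $[\kappa+\tfrac{3}{2}\mu,\infty)$ on $B_{1/2}$, so there $u_k$ in fact solves the uniformly elliptic equation $F_k(D^2u_k)=f_k(|\xi_k+\vartheta_k Du_k|-\kappa)_+^{-q}$, whose right-hand side is $\le\|f_k\|_\infty(\tfrac{3}{2}\mu)^{-q}$. Shrinking the parameter $\varsigma$ (hence $\varsigma_0$) of Lemma \ref{compact_for_large_grad} if necessary so that this is $\le 1$, the classical interior $C^{1,\beta}$ estimate for uniformly elliptic equations yields $\|Du_k\|_{L^\infty(B_{1/4})}\le C(n,\lambda,\Lambda)$. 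Rescaling back, $|Du(x)-\xi_k|=\tau_0\rho_0^{(k-1)\gamma}|Du_k(\rho_0^{-(k-1)}x)|\le C\tau_0\rho_0^{(k-1)\gamma}$, and adding the tail bound $|\xi_k-\xi_\infty|\le C\tau_0\rho_0^{(k-1)\gamma}$ together with $\rho_0^{(k-1)\gamma}\le C|x|^{\gamma}$ gives $|Du(x)-Du(0)|\le C\tau_0|x|^{\gamma}$.

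The delicate point is the constant-chasing in this last step: a naive use of Lemma \ref{bustelolemma} only produces a bound carrying the $q$- and $\mu$-dependent factor $\mu^{-q}$, whereas the corollary asks for $C=C(n,\lambda,\Lambda)$ (all remaining dependence being absorbed into $\tau_0$). This is circumvented by the two-stage bootstrap above --- the \emph{unconditional} Lipschitz estimate of Lemma \ref{bustelolemma} is used \emph{only} to trap $|\xi_k+\vartheta_k Du_k|$ inside a fixed compact subset of $(\kappa,\infty)$, after which the genuinely uniformly elliptic reformulation (with right-hand side normalized to size $\le 1$ by shrinking $\varsigma$) is handed to the classical $C^1$ theory. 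Everything else is routine geometric-series bookkeeping over dyadic annuli, plus the observation that the inner $L^\infty$ control of $u-\ell_k$ already identifies $\lim_k\xi_k$ with $Du(0)$.
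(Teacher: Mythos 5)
Your proposal is correct, and its first half (telescoping the increments $\xi_k-\xi_{k-1}$, $b_k-b_{k-1}$, obtaining the tail bounds, and identifying $\xi_\infty=Du(0)$, $b_\infty=u(0)$ via the first-order expansion at the origin) coincides with the paper's argument. Where you genuinely diverge is in the final passage to the pointwise bound $|Du(x)-Du(0)|\leq C\tau_0|x|^{\gamma}$: the paper derives $|u(x)-u(0)-Du(0)\cdot x|\leq \overline{C}\tau_0|x|^{1+\gamma}$ and then simply asserts the gradient estimate "in particular," leaving implicit the standard (but nontrivial) step from a pointwise $C^{1,\gamma}$ expansion at a single point to an oscillation bound on $Du$. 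You instead re-run the blow-up $u_k$ at the scale $\rho_0^{k-1}\sim|x|$, use Lemma \ref{bustelolemma} only to trap $|\xi_k+\vartheta_k Du_k|$ in a fixed compact subset of $(\kappa,\infty)$, and then hand the resulting uniformly elliptic equation (with right-hand side normalized by shrinking $\varsigma_0$, which is permitted since $\varsigma_0$ may depend on $\mu$ and $q$) to the interior $C^{1,\beta}$ theory to get $|Du(x)-\xi_k|\leq C\tau_0\rho_0^{(k-1)\gamma}$ directly. This two-stage bootstrap is sound --- the crude Lipschitz bound on $v_k=\xi_k\cdot y+\vartheta_k u_k$ forces every admissible test function to have slope above $\kappa+\tfrac{3}{2}\mu$, so the $\kappa$-grad formulation upgrades to a classical viscosity one --- and it both justifies that $Du(x)$ is classically defined and supplies the step the paper glosses over, at the cost of invoking the interior estimates one extra time; your constant-tracking (all $\mu$- and $q$-dependence absorbed into $\varsigma_0$ and $\tau_0$, leaving $C=C(n,\lambda,\Lambda)$) is consistent with the statement.
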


\begin{proof}
Recall that 
\begin{equation}\label{COEFDECAY}
    \rho_0^k |\xi_{k+1} - \xi_k|+|b_{k+1} - b_k| \leq 2C\tau_0 \rho_0^{k(1+\gamma)},
\end{equation}
implies that sequences $\xi_k$ and $b_k$ converge. Labeling, 
$$
    \lim_{k \rightarrow \infty}\xi_k = \xi_\infty, \quad \mbox{and} \quad \lim_{k \rightarrow \infty}b_k = b_\infty, 
$$
from \eqref{COEFDECAY}, we obtain
\begin{equation}\nonumber
    |\xi_\infty - \xi_k| \leq \displaystyle \frac{C\tau_0}{1 - \rho_0}\rho_0^{k\gamma}
    \quad \mbox{and} \quad
    |b_\infty - b_k| \leq \displaystyle \frac{C\tau_0}{1 - \rho_0}\rho_0^{k(1+\gamma)}.
\end{equation}
Next, given $r<1$, consider integer $k>0$ such that $\rho_0^{k+1} \leq r \leq \rho_0^k$. Hence, denoting 
$$
    \ell_\infty(x) \coloneqq \xi_\infty \cdot x + b_\infty,
$$
we apply Proposition \ref{reg_for_large_grad}, obtaining so 
\begin{equation}\nonumber
    |u(x) - \ell_\infty(x)| \leq |u(x) - \ell_k(x)| + |\ell_k(x) - \ell_\infty(x)| \leq \displaystyle \tau_0 \frac{1}{\rho_0^{1+\gamma}}\left(1 + \frac{2C}{1-\rho_0} \right)r^{1+\gamma},
\end{equation}
for each $x \in B_{\rho_0^k}$. This implies that
$$
    \sup_{x \in B_r}|u-\ell_\infty|(x) \leq \tau_0 \overline{C} r^{1+\gamma}.
$$
and some constant $\overline{C} = \overline{C}(n,\lambda,\Lambda)$. Therefore,
$$
	|u(x) - \ell_\infty(x)| \leq \tau_0 \overline{C}|x|^{1+\gamma},
$$
for $|x|< 1$. Notice that if we make $x=0$ we get $b_\infty = u(0)$. Furthermore, for $s < 1$ we get
$$
	\left|\frac{u(se_i) - u(0)}{s} - \xi_\infty \cdot \Vec{e}_i \right| \leq \tau_0 \overline{C} s^{\gamma}.
$$
where $e_i$  is a $n$-dimensional canonical vector. Passing to the limit when $s \rightarrow 0$ we obtain that $\xi_\infty \cdot e_i = \partial_{e_i} u(0)$ for every $i = 1,\cdots,n$, and so $\xi_\infty = Du(0)$. Therefore,
$$
	|u(x) - u(0) - Du(0) \cdot x| \leq \tau_0 \overline{C} |x|^{1+\gamma},
$$
for $|x|<1$. In particular,
$$
	|Du(x) - Du(0)| \leq \tau_0 \overline{C} |x|^\gamma,
$$
for $x \in B_{1/2}$.
\end{proof}

\subsection{Proof of Proposition \ref{unif_c1_thm}}\label{subsect proof C1} 
First, for $r\leq 1/2$ and $x_0 \in B_{1/4}$, we define
$$
    u_r(x) \coloneqq \frac{1}{r}u(x_0 + rx),
$$
for $x \in B_1$. Note that we have
\begin{equation}\label{lip_bound}
	\|Du_r\|_{L^\infty(B_{1/2})} \leq \|Du\|_{L^\infty(B_{1/2})} \leq C, 
\end{equation}
where the last estimate is due to Theorem \ref{LipTHM}, for some $C$ depending on dimension, ellipticity and $\|f\|_\infty$. Additionally, we observe that $u_r$ solves
$$
	(|Du_r|-\kappa)_+^q F_r(D^2 u_r) = f_r \quad \mbox{in }\; B_1
$$
for $F_r(M) = rF(r^{-1}M)$ and $f_r(x)=rf(x_0 + rx)$. Next, consider 
$$
    r \coloneqq \frac{1}{1+\|f\|_\infty}\varsigma_0 
$$
for $\varsigma_0$ as in Proposition \ref{reg_for_large_grad}. In the sequel, let
$$
    w_e \coloneqq (Du_r\cdot e - (\kappa +\mu))_+ \quad \mbox{and} \quad w_M \coloneqq (|Du_r| - (\kappa +\mu))_+.
$$
Let $\eta \in (0,1)$ to be chosen later. Define $i_\star \in \mathbb{N}$ to be the smallest parameter $i$ such that
\begin{equation}\nonumber
    \begin{array}{c}
         \displaystyle \sup_{e \in \partial B_1}\left|\left\{x \in B_{2^{-(2i+1)}}\suchthat w_e \geq (1-\eta)\|w_M\|_{L^\infty\left(B_{2^{-2i}} \right)} \right\}\right| \geq \\[0.4cm]
         (1-\eta)|B_{2^{-(2i+1)}}|. 
    \end{array}    
\end{equation}
If $i_\star = \infty$, Proposition \ref{unif_c1_thm} follows directly from Proposition \ref{small_grad_improv_osc}. If, on the other hand, $i_\star < +\infty$, for constants $\overline{C}>0$ and $\alpha \in (0,1)$, there holds
\begin{equation}\label{grad_decay}
    \|w_M\|_{L^\infty\left(B_{2^{-2i}}\right)} \leq \overline{C}2^{-2(i-1)\alpha},
\end{equation}
for all $i=1,2, \cdots, i_\star$. Thus, we can estimate
$$
    \|w_M\|_{L^\infty\left(B_{2^{-2i}}\right)} \leq \|w_M\|_{L^\infty\left(B_{2^{-2i_\star}}\right)} \leq \overline{C} 2^{-2(i_\star-1)\alpha} \leq 4\overline{C}2^{-i\alpha}
$$
for $i=i_\star+1, \cdots, 2i_\star$. From the definition of $i_\star$, there exists at least one direction $e \in \partial B_1$ for which  
$$
    \left|\left\{x \in B_{2^{-2i_\star-1}}\suchthat w_e \geq (1-\eta)\|w_M\|_{L^\infty\left(B_{2^{-2i_\star}}\right)} \right\}\right| \geq (1-\eta)|B_{2^{-2i_\star-1}}|.
$$
Therefore, for
$$
    \overline{v}(x) \coloneqq 2^{2i_\star+1}(u_r(2^{-2i_\star-1}x) - u_r(0)) \quad \mbox{for } \; x \in B_1,
$$
we have that
$$
    |\{x \in B_1\suchthat (\partial_e\overline{v}(x) - (\kappa + \mu))_+ \geq (1-\eta)d_\star \}| \geq (1-\eta)|B_1|,
$$
where $d_\star \coloneqq \|(\partial_e\overline{v}(x) - (\kappa + \mu))_+ \|_{L^\infty(B_1)}$. Additionally, by \eqref{lip_bound} there holds
$$
    |D\overline{v}| \leq \kappa + \mu + d_\star \leq C  \quad \mbox{and} 
\quad \overline{v}(0) = 0.
$$
Considering $\epsilon = \tau_0/C$ and applying \cite[Lemma 4.1]{CF14}, we choose $\eta$ (depending only on the choice of $\epsilon$) to find $(\xi,b)$, such that 
$$
|\xi| = \kappa + \mu + d_\star
\quad 
\mbox{and}
\quad
|\overline{v}(x) - b - \xi \cdot x | \leq \epsilon(\kappa + \mu + d_\star) \leq \tau_0.
$$
We can now apply Proposition \ref{reg_for_large_grad} to obtain
$$
|D\overline{v}(x) - D\overline{v}(0)| \leq C_1|x|^{\gamma}
$$
for $x \in B_{1/2}$. Recall that for $x \in B_{2^{-2i_\star -2}}$ we have
\begin{equation}\nonumber
    |w_M(x) - w_M(0)|  \leq |D\overline{v}(2^{2i_\star+1}x) - D\overline{v}(0)|,
\end{equation}
and thus
\begin{equation}\nonumber
|w_M(x) - w_M(0)|  \leq C_12^{-i\gamma},
\end{equation}
for each $x \in B_{2^{-2i}}$ and $i \geq 2i_\star+1$. 

\smallskip
We are ready to conclude the proof. Setting 
$$
C' = 8 \max\{\overline{C},C_1\} \quad \mbox{and} \quad \overline{\alpha} = \frac{1}{2}\min\{\alpha, \gamma \},
$$
we conclude
$$
\| (|Du(x)| - (\kappa+\mu))_+ - (|Du(x_0)| - (\kappa+\mu))_+ \|_{L^\infty\left(B_{r2^{-2i}}(x_0)\right)} \leq C'2^{-2i\overline{\alpha}},
$$
for every $i \in \mathbb{N}$. Given $x \in B_r(x_0)$, we take integer $j>0$, such
$$
r2^{-2(j+1)} \leq |x-x_0| \leq r2^{-2j}.
$$
This implies that
$$
2^{-2j\overline{\alpha}} \leq \left(\frac{4|x-x_0|}{r} \right)^{\overline{\alpha}}.
$$
We then obtain
\begin{equation}\nonumber
\displaystyle |(|Du(x)| - (\kappa+\mu))_+ - (|Du(x_0)| - (\kappa+\mu))_+ |  \leq \displaystyle C''|x - x_0|^{\overline{\alpha}},\\[0.4cm]
\end{equation}
for $x \in B_r(x_0)$, and constant $C''>0$ depends upon $\mu$, $q$, $n$, $\lambda$, $\Lambda$ and $\|f\|_\infty$. For $x \in B_{1/2}\setminus B_r(x_0)$, we estimate
\begin{equation}\nonumber
\begin{array}{lll}
\displaystyle |(|Du(x)| - (\kappa+\mu))_+ - (|Du(x_0)| - (\kappa+\mu))_+ | & \leq & \displaystyle 2\|Dv\|_{L^\infty(B_1)}\\[0.4cm]
& \leq & \displaystyle  C \,|x - x_0|^{\overline{\alpha}}
\end{array}
\end{equation}
where $C$ is another constant that depends only on $\mu$, $q$, $n$, $\lambda$, $\Lambda$ and $\|f\|_\infty$. Since $x_0$ was taken arbitrary, the proof of Proposition \ref{unif_c1_thm} is finally complete.

\subsection{Concluding the proof of Theorem \ref{C1THM}}\label{sct proof C1} 

Recall that $u$ is an effective viscosity solution of
\begin{equation}\nonumber
    (|Du|-\kappa)_+^q \,F(D^2 u) = f.
\end{equation}
By Theorem \ref{LipTHM}, we have $\|Du\|_\infty \leq C$, for a positive constant 
$$
    C = C(n,\lambda,\Lambda,\kappa, \|u\|_\infty, \|f\|_\infty).
$$ 
By Proposition \ref{unif_c1_thm}, given any $\mu>0$, there exist constants $C_\mu>0$ and $\alpha_\mu \in (0,1)$ depending upon $\mu$ and universal data, such that: 
\begin{equation}\nonumber
	\|(|Du| - (\kappa+\mu))_+\|_{C^{0,\alpha_\mu}(B_{1/4})}  \leq  C_\mu.
\end{equation}
To ease notation define 
$$
    v_\mu(x) = (|Du(x)| - (\kappa+\mu))_+ \quad \text{ and } \quad v(x) = (|Du| - \kappa)_+
$$
By triangle inequality we can estimate:
\begin{equation}\nonumber
\begin{array}{lll}
	\displaystyle |v(x) - v(y)| & \leq & \displaystyle |v_\mu(x) - v(x)| + |v_\mu(y) - v(y)| + |v_\mu(x) - v_\mu(y)| \\
	& & \\
	& \leq & 2\mu + C_\mu|x-y|^{\alpha(\mu)},
\end{array}
\end{equation}
for every $\mu \in (0,1)$. Since such an estimate holds for all $\mu>0$, we obtain
\begin{equation}\nonumber
	|v(x) - v(y)| \leq \sigma(|x-y|),
\end{equation}
where
$$
    \sigma(t) := \min_{\mu \in (0,1)}\{2\mu + C_\mu t^{\alpha(\mu)} \}. 
$$
It is easy to see that $\sigma$, as defined above, is indeed a modulus of continuity and that $Du$ is $\sigma$-continuous within the region $\{ |Du| \geq \kappa \}$.

\section{Applications}\label{sct Applic}
In this final Section we briefly discuss some connections the main Theorems proven in this paper, and the ingredients introduced in their respective proofs, have with other treads of research. 

\subsection{Unconstrained free boundary problems}\label{subsct FB} Initially we revisit the theory of unconstrained free boundary problems, as in the work of Figalli and Shahgholian, \cite{FS14}. 

Let $\Omega$ be an open set of $\mathbb{R}^n$ and $w \in W^{2,p}(B_1)$ be a viscosity solution of 
$$
\begin{cases}
	F(D^2w) = 1 &\quad\text{in $B_1 \cap \Omega$}\\
	|D^2 w| \leq  K & \quad \text{in $B_1 \backslash \Omega$},
\end{cases}
$$
where $F$ is convex and uniformly elliptic. The main result proven in \cite{FS14} is a sharp $C^{1,1}$ regularity of solutions. It is worth comparing such an improved estimate with the results of \cite{Teix13}, where $C^{1, \text{log-Lip}}$ regularity is proven for $F(D^2u) = f \in L^\infty$; see also \cite{CaffH} for related results.

Theorem \ref{LipTHM} can also be viewed as an unconstrained free boundary problem; the first-order counterpart of \cite{FS14}. More precisely, solutions of
$$
\begin{cases}
	F(D^2w) = 1 &\quad\text{in $B_1 \cap \Omega$}\\
	|D w| \leq  K & \quad \text{in $B_1 \backslash \Omega$},
\end{cases}
$$
are $K$-grad viscosity solutions in the sense investigated in this paper. In particular, in the case of linear equations, say $F = \Delta$, Theorem \ref{LipTHM} applied to $w_e$ implies the sharp $C^{1,1}$-regularity of unconstrained free boundary problems at the hessian level. Furthermore, Theorem \ref{C1THM}, applied to $w_e$, yields to the existence of a modulus of continuity $\sigma$ such that $D^2w \in C^{0,\sigma}(\{|D^2 w|\ge K \} \cap B_{1/2})$.

\subsection{PDE models with infinite degree of degeneracy}\label{subsec limiting FB} Next we would like to discuss connections with limiting free boundary problems, obtaining when the degree of degeneracy tends to infinity. More precisely, let us look at the non-variational $q$-Laplacian equation:
\begin{equation}\label{nonv_q_lap}
    |Du|^qF(D^2u) = f.
\end{equation}
This model has received warm attention in the last two decades, see for instance \cite{ART15,BD04,DF21,DF22,IS13,SR20} and references therein. 

An important Corollary of the analysis carried out in Section \ref{sct proof lip} is the following (uniform-in-$q$) sharp regularity estimate:

\begin{corollary}\label{cor-Lip}
Let $q\ge 0$, $f\in L^\infty(B_1)$, and $u_q$ be a normalized viscosity solution of
$$
    |Du_q|^q F(D^2u_q) = f \text{ in }  B_1.
$$
Then, there exists a constant $C$, depending only on dimension, ellipticity, and $\|f\|_{L^\infty(B_1)}$, but independent of $q$, such that
$$
    \|D u_q\|_{L^\infty(B_{1/2})} \le C.
$$
\end{corollary}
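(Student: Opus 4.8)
The plan is to deduce Corollary \ref{cor-Lip} directly from Theorem \ref{LipTHM} applied with $\kappa=0$, so the real work is just matching hypotheses. First I would note that for $q>0$ the map $(\xi,M)\mapsto |\xi|^qF(M)$ is jointly continuous on $\mathbb{R}^n\times\mathrm{Sym}(n)$ (it vanishes as $\xi\to 0$), while for $q=0$ the equation is simply $F(D^2u_q)=f$; in either case an ordinary viscosity solution is well defined. Next comes the key observation: the notion of $0$-grad viscosity solution of Definition \ref{def-kappa-grad} is \emph{weaker} than the ordinary one, since it only imposes the viscosity inequalities against test functions $\varphi$ with $|D\varphi(x_0)|>0$ at the contact point. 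Hence any ordinary viscosity solution $u_q$ of $|Du_q|^qF(D^2u_q)=f$ is, a fortiori, a $0$-grad viscosity solution of \eqref{maineq} with $\kappa=0$.

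With that reduction in hand, the second step is merely to invoke Theorem \ref{LipTHM} for the $0$-grad solution $u_q$, with $\kappa=0$ and the normalization $\|u_q\|_\infty\le 1$ already built into the hypothesis. The theorem yields $\|Du_q\|_{L^\infty(B_{1/2})}\le C$ with $C=C(n,\lambda,\Lambda,\kappa,\|f\|_\infty,\|u_q\|_\infty)$ and, decisively, $C$ independent of $q$. Specializing $\kappa=0$ and $\|u_q\|_\infty\le 1$ leaves exactly a constant depending only on dimension, ellipticity, and $\|f\|_{L^\infty(B_1)}$, which is the claimed bound.

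There is no genuine obstacle here; the only point worth double-checking is that the $\kappa$-dependence of the constant in Theorem \ref{LipTHM} does not blow up as $\kappa\downarrow 0$. Revisiting that proof, the barrier parameters are constrained by $L>\max\{4(1+\kappa),\,4K,\,Kn\Lambda/\lambda+\lambda^{-1}\|f\|_\infty\}$, and the degeneracy enters only through the term $(\tfrac{L}{4}-\kappa)_+^{-q}$, which is bounded by $1$ as soon as $L\ge 4(1+\kappa)$ — uniformly in $q$. Setting $\kappa=0$ only relaxes the first constraint, so the admissible choice of $L$, and hence the Lipschitz bound, is no larger. Thus the content of Corollary \ref{cor-Lip} is precisely the $q$-uniform bookkeeping already carried out in the proof of Theorem \ref{LipTHM}, now read off at $\kappa=0$.
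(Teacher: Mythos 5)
Your proof is correct and is exactly the argument the paper intends: the paper presents Corollary \ref{cor-Lip} as an immediate consequence of the Section \ref{sct proof lip} analysis, i.e.\ an ordinary viscosity solution is a fortiori a $0$-grad viscosity solution, and Theorem \ref{LipTHM} with $\kappa=0$ gives the $q$-independent bound. Your check that the constraint $L>\max\{4(1+\kappa),4K,Kn\Lambda/\lambda+\lambda^{-1}\|f\|_\infty\}$ behaves well at $\kappa=0$, with the degeneracy entering only through $(\tfrac{L}{4}-\kappa)_+^{-q}\le 1$, is precisely the bookkeeping the authors refer to.
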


Now, let $\{u_q\}_{q>0}$ be a family of normalized viscosity solutions to \eqref{nonv_q_lap}. By Corollary \ref{cor-Lip}, up to subsequence, that $u_q \rightarrow u_\infty$, for some Lipschitz function $u_\infty$. Easily one verifies that $u_\infty$ satisfies:
$$
    F(D^2 u_\infty) = 0, \quad \text{ in } \{|Du_\infty | > 1 \},
$$
that is, $u_\infty$ is a $1$-grad $F$-harmonic function. 

A careful scrutiny of the proofs delivered in this paper yields the following result for the PDE model \eqref{nonv_q_lap}:

\begin{theorem}
Let $q\ge 0$, $f\in \Lip(\overline{B}_1)$, and $u_q$ be a normalized viscosity solution of
$$
    |Du_q|^q F(D^2u_q) = f \text{ in }  B_1.
$$
Then, given $0 < \mu < 1$, there exists constants $0 < \alpha_\mu < 1$ and $C_\mu>0$ depending on data, $\mu$ but independent of $q$ such that
$$
    \|(|Du_q| - (1+\mu))\|_{C^{0,\alpha_\mu}(B_{1/2})} \leq C_\mu.
$$
\end{theorem}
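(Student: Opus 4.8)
The plan is to reproduce the argument of Section~\ref{sct-C1} (culminating in Proposition~\ref{unif_c1_thm}), but now with the threshold $\kappa$ set equal to the fixed value $1$ --- the level that controls the limit $q\to\infty$, as opposed to the genuine degeneracy set $\{|Du_q|=0\}$ of the equation --- and with the operator $\mathcal G(\xi)=(|\xi|-\kappa)_+^q$ of Lemma~\ref{sub_equation} replaced by $\mathcal G(\xi)=|\xi|^q$, and then to check that \emph{every} constant produced along the way can be taken independent of $q$. As a preliminary, after normalizing $u_q$, Corollary~\ref{cor-Lip} gives the $q$-independent bound $\|Du_q\|_{L^\infty(B_{3/4})}\le C$. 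Moreover, by the interior $C^{1,\beta}$ theory for \eqref{nonv_q_lap} (see \cite{IS13,ART15}), $u_q\in C^{1}_{\text{loc}}(B_1)$, so $\Omega:=\{|Du_q|>1\}$ is open, and on $\Omega$ the equation reads
\begin{equation}\nonumber
F(D^2 u_q)=f\,|Du_q|^{-q}=:g_q,\qquad \|g_q\|_{L^\infty(\Omega)}\le\|f\|_{L^\infty},
\end{equation}
because $|Du_q|^{-q}\le1$ on $\Omega$; in particular, on $\Omega$ the function $u_q$ solves, in the viscosity sense, a uniformly elliptic equation whose right-hand side is bounded independently of $q$ (in fact it decays as $q\to\infty$).

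The first half of the argument, improvement of oscillation near $\{|Du_q|=1\}$, rests on the analogue of Lemma~\ref{sub_equation}. Fixing a unit vector $e$ and setting $w:=(\partial_e u_q-(1+\mu))_+$, we have $\{w>0\}\subset\{|Du_q|>1+\mu\}$; differentiating $\mathcal G(Du_q)F(D^2u_q)=f$ in the direction $e$ and dividing by $\mathcal G(Du_q)$ exactly as in the proof of Lemma~\ref{sub_equation}, the degeneracy enters only through $|D\mathcal G(Du_q)|\,[\mathcal G(Du_q)]^{-2}=q\,|Du_q|^{-q-1}$ and $[\mathcal G(Du_q)]^{-1}=|Du_q|^{-q}$. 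On $\{|Du_q|>1+\mu\}$ these are controlled by
\begin{equation}\nonumber
q\,|Du_q|^{-q-1}\le q(1+\mu)^{-q-1}\le A_\mu:=\sup_{q\ge0}q(1+\mu)^{-q-1}<\infty\qquad\text{and}\qquad |Du_q|^{-q}\le(1+\mu)^{-q}\le1,
\end{equation}
where $A_\mu$ is finite and \emph{independent of $q$}, since $q(1+\mu)^{-q-1}\to0$ as $q\to\infty$. Hence $w$ is a viscosity subsolution in $B_1$ of
\begin{equation}\nonumber
\mathcal M^+(D^2w)+A_\mu\|f\|_{L^\infty}|Dw|\ge-\|f\|_{Lip},
\end{equation}
which is Lemma~\ref{sub_equation} with $q$-free coefficients. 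The weak Harnack inequality and the dyadic De~Giorgi iteration of Lemma~\ref{improv_oscil_step_1} and Proposition~\ref{small_grad_improv_osc} then go through verbatim, producing improvement of oscillation for $w_M=(|Du_q|-(1+\mu))_+$ with constants depending on $\mu,n,\lambda,\Lambda,\|f\|_{Lip}$ but not on $q$.

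The second half, regularity far from $\{|Du_q|=1\}$, is in fact \emph{simplified} by the threshold being fixed rather than vanishing: on $\{|Du_q|>1+\mu\}$ the equation is $F(D^2u_q)=g_q$ with $\|g_q\|_{L^\infty}\le\|f\|_{L^\infty}$, so Caffarelli's interior $C^{1,\alpha}$ estimates directly supply gradient Hölder bounds on this set, uniform in $q$, degenerating only as one approaches $\partial\{|Du_q|>1+\mu\}$. This plays the role of the chain Lemma~\ref{compact_for_large_grad}--Lemma~\ref{bypass_proximity}--Proposition~\ref{reg_for_large_grad}; crucially, one never needs the $\kappa\to0$ device of Lemma~\ref{bypass_proximity}, since the genuinely elliptic equation is available directly. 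The $\vartheta$-perturbed Lipschitz estimate (the analogue of Lemma~\ref{bustelolemma}) also remains $q$-free: at the competing double-variable maxima one has $|\xi+\vartheta Du_q|\ge1+\mu$, so the degenerate prefactor obeys $(|\xi+\vartheta Du_q|)^{-q}\le(1+\mu)^{-q}\le1$ and the resulting bound on $L$ carries no factor $\mu^{-q}$. Splicing the near and far regimes exactly as in Subsection~\ref{subsect proof C1} --- slicing $\Omega$ into $\{1<|Du_q|<1+\mu\}$ and $\{|Du_q|>1+\mu\}$ and running the geometric flatness iteration around rescaled points --- yields $\|(|Du_q|-(1+\mu))_+\|_{C^{0,\alpha_\mu}(B_{1/2})}\le C_\mu$ with $\alpha_\mu\in(0,1)$ and $C_\mu>0$ depending on $\mu$ and universal data but not on $q$.

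The hard part will be the bookkeeping in this flatness iteration: each rescaling has to be centered at a point where $|Du_q|$ exceeds $1$ by a universal multiple of $\mu$, so that there, after dividing by $|Du_q|^q\ge(1+c\mu)^q$, the equation is uniformly elliptic with right-hand side bounded independently of $q$. The complementary situation --- the relevant gradient level set being already nearly flat against $\{|Du_q|=1+\mu\}$ --- forces $w_M$ to be comparably small, which closes the induction without invoking the elliptic step. Verifying that each use of the perturbation, compactness, and iteration arguments respects this dichotomy, so that no factor $\mu^{-q}$ or polynomial-in-$q$ factor survives, is the portion demanding the ``careful scrutiny'' the statement alludes to; the remainder is a direct transcription of Sections~\ref{sct proof lip}--\ref{sct-C1}.
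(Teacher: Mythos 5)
Your proposal is correct and is essentially the paper's own argument: the paper offers no separate proof of this theorem beyond the remark that a careful scrutiny of Sections \ref{sct proof lip}--\ref{sct-C1} yields it, and your rerun of that machinery with the gradient threshold fixed at $1$ is precisely that scrutiny. The two quantitative observations that make everything $q$-free --- $\sup_{q\ge 0} q(1+\mu)^{-q-1}=A_\mu<\infty$ for the drift in the differentiated equation, and $(1+\mu)^{-q}\le 1$ on $\{|Du_q|>1+\mu\}$ for the source, the Lipschitz bound, and the compactness step --- are exactly the right ones, and your remark that the classical viscosity formulation renders the $\kappa\to 0$ device of Lemma \ref{bypass_proximity} unnecessary is a legitimate minor streamlining rather than a different route.
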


As a further consequence, one obtains that the limiting solution $u_\infty$ has continuous gradient up to the free boundary.

\subsection{Flame propagation with an obstacle}\label{subsct flame propagation} Singularly perturbed PDEs of the flame propagation type have received warm attention since the pioneering work \cite{BCN}, see for instance \cite{ART2, BW, CK, CV, LVW, LW1, LW2, MW, Teix08} and references therein. For free boundary problems driven by operators in non-divergence form, introducing a heavy penalization term, $\beta_\epsilon(u)$, allows for an existence theory, as long as one can obtain strong enough estimates that are uniform with respect to the regularizing parameter $\epsilon$, see for instance \cite{ATeix, DPS, K, RTeix}. 

Typically, $\beta_\epsilon$ is an approximation of the Dirac delta function, $\delta_0$, in $L^1$. One can think of 
$$
    \beta_\epsilon(s) := \frac{1}{\epsilon} \beta \left ( \frac{s}{\epsilon} \right ),
$$  
where $\beta$ is a fixed, smooth function with bounded support. The main goal is to obtain uniform-in-$\epsilon$ regularity estimates for $u$ and its free boundary. 

Here we are interested in a new type of flame propagation models, which carries activation fronts. Mathematically this gives raise to a free boundary problem of the singularly perturbed type for which the jump discontinuity happens along the coincidence set $\Lambda_\epsilon := \{ u_\epsilon = \varphi\}$, for a given obstacle function $\varphi$. 

The starting point of this program is to prove that solutions are uniformly-in-$\epsilon$ Lipschitz continuous, provided the obstacle, $\varphi$, is Lipschitz. This is the result we discuss here as the final application of the methods introduced in this paper.

Hereafter $u_\epsilon$ denotes a viscosity solution of the PDE
\begin{equation}\label{RTeq}
    F(D^2 u_\epsilon) = \beta_\epsilon(u_\epsilon - \varphi),
\end{equation}
with $u_\epsilon \geq \varphi$ and $\varphi \in C^{0,1}$. The main theorem we prove here is the following:

\begin{theorem}
Given $\Omega' \Subset \Omega$, there exists a constant $C'$ such that any bounded family $\{u_\epsilon \}_{\epsilon > 0}$ of solutions of $(\ref{RTeq})$ satisfies
$$
   \|Du_\epsilon\|_{L^\infty(\Omega')} \leq C'\left(n,\lambda,\Lambda,\beta,[\varphi]_{C^{0,1}},\Omega'\right).
$$
\end{theorem}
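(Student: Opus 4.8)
The plan is to show that the flame-propagation problem \eqref{RTeq} can be recast, after differentiation, as an unconstrained free boundary problem of exactly the type covered by Theorem \ref{LipTHM}, and then invoke that theorem. First I would fix $\Omega' \Subset \Omega'' \Subset \Omega$ and work locally in a ball $B_1 \subset \Omega''$ after rescaling; by a standard normalization I may assume $\|u_\epsilon\|_{L^\infty} \le 1$ and work with $v_\epsilon := u_\epsilon - \varphi \ge 0$. The key observation is that $\beta_\epsilon(v_\epsilon)$ is supported in the thin strip $S_\epsilon := \{0 \le v_\epsilon \le \epsilon\}$, since $\beta$ has bounded support; outside $S_\epsilon$, one has $F(D^2 u_\epsilon) = 0$. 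Thus $u_\epsilon$ is $F$-harmonic in $\{v_\epsilon > \epsilon\}$ and, in the complementary region $\overline{S_\epsilon}$, I want to control not $D^2 u_\epsilon$ but a first-order quantity so that Theorem \ref{LipTHM} applies at the level of $u_\epsilon$ itself rather than its derivatives.

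The cleanest route I see is to avoid differentiating $u_\epsilon$ and instead use the obstacle structure directly. In $\{v_\epsilon > \epsilon\}$, $u_\epsilon$ solves $F(D^2 u_\epsilon) = 0$; in $\{v_\epsilon < \epsilon\}$, we have $0 \le u_\epsilon - \varphi < \epsilon$, so $u_\epsilon$ is uniformly $\epsilon$-close to the Lipschitz obstacle $\varphi$, hence ``essentially Lipschitz'' there with the Lipschitz constant of $\varphi$ up to an $\epsilon$-error that vanishes. More precisely, I would run the barrier/doubling argument from the proof of Theorem \ref{LipTHM}: set $M := \sup_{B_{1/2}\times B_{1/2}} (u_\epsilon(x) - u_\epsilon(y) - L\phi(|x-y|) - K(|x|^2+|y|^2))$ with $\phi$ as in \eqref{barrier_func}, assume $M>0$ with interior maximum at $(\bar x, \bar y)$, $\bar x \ne \bar y$, and choose $L \ge L_0 := L_0(n,\lambda,\Lambda,[\varphi]_{C^{0,1}})$ larger than the Lipschitz constant of $\varphi$ plus a universal multiple of $K$. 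One splits into cases according to whether $\bar x$ (resp. $\bar y$) lies in $\{v_\epsilon > \epsilon\}$ or its closure: when both touching points give test functions with slope exceeding any fixed threshold $\kappa_0$, the equation at those points is either $F(D^2\cdot)=0$ (genuinely elliptic, giving the Jensen–Ishii estimate as in Lemma \ref{jensen_ishii_lemma} with $f=0$) or we are in the strip, where the competitor $\varphi$ itself, being Lipschitz with constant $<L$, already violates the inequality $u_\epsilon(\bar x)-u_\epsilon(\bar y) > L\phi(|\bar x-\bar y|)+\cdots$ up to the $\epsilon$-error — contradiction for $\epsilon$ small and $L$ large.

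The step I expect to be the main obstacle is handling the interface $\partial\{v_\epsilon > \epsilon\}$ rigorously in the viscosity framework: the penalization $\beta_\epsilon(v_\epsilon)$ is bounded (by $\epsilon^{-1}\|\beta\|_\infty$) but \emph{not} uniformly in $\epsilon$, so one cannot simply treat \eqref{RTeq} as $F(D^2 u_\epsilon) = f$ with $f$ uniformly bounded. The resolution is that the penalization only ever acts \emph{against} the direction we need (by one-sided structure: $\beta \ge 0$ makes $u_\epsilon$ a subsolution of $F(D^2\cdot)\ge 0$ globally, while the supersolution inequality is what forces $u_\epsilon$ down toward $\varphi$), so in the doubling argument only one of the two viscosity inequalities from Jensen–Ishii is needed at each touching point, and that one is the ``good'' one: at $\bar x$ we use $F(X) \ge 0$ (subsolution, valid everywhere since $\beta_\epsilon \ge 0$), and at $\bar y$ we use that either $F(Y) \le 0$ when $v_\epsilon(\bar y)>\epsilon$, or $u_\epsilon(\bar y) \ge \varphi(\bar y)$ with $u_\epsilon - \varphi$ small, which lets us replace the test function at $\bar y$ by $\varphi$ and use its Lipschitz bound directly. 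Combining these exactly as in \eqref{pde}–\eqref{contr_ineq} — but with the $\|f\|_\infty$-term deleted, since we never invoke the supersolution inequality at a strip point — yields $L < Kn\Lambda/\lambda + o_\epsilon(1)$, so choosing $L$ as the maximum of $4(1+\kappa_0)$, $4K$, $Kn\Lambda/\lambda + 1$ and $2[\varphi]_{C^{0,1}}$ forces $M \le 0$ for all small $\epsilon$. Undoing the normalization and covering $\Omega'$ by finitely many balls gives the stated bound with $C' = C'(n,\lambda,\Lambda,\beta,[\varphi]_{C^{0,1}},\Omega')$; I would remark that the dependence on $\beta$ enters only through the smallness threshold for $\epsilon$ (via $\|\beta\|_\infty$ and $\operatorname{supp}\beta$), and drops out of the final constant once $\epsilon$ is below that threshold, while for $\epsilon$ above the threshold the bound is the classical one for $F(D^2 u_\epsilon) = \beta_\epsilon(v_\epsilon) \in L^\infty$ with norm $\le \epsilon_0^{-1}\|\beta\|_\infty$.
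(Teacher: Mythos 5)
There is a genuine gap in your treatment of the strip $S_\epsilon=\{0\le u_\epsilon-\varphi\le\epsilon\}$, and it sits at exactly the scale where the problem is hard. In your doubling argument, suppose the maximum of $u_\epsilon(x)-u_\epsilon(y)-L\phi(|x-y|)-K(|x|^2+|y|^2)$ is attained at $(\bar x,\bar y)$ with a touching point in the strip. If it is $\bar y\in S_\epsilon$, the obstacle only gives $u_\epsilon(\bar y)\le\varphi(\bar y)+\epsilon$ and $u_\epsilon(\bar x)\ge\varphi(\bar x)$, i.e.\ a \emph{lower} bound on $u_\epsilon(\bar x)-u_\epsilon(\bar y)$, hence no contradiction; the useful configuration is $\bar x\in S_\epsilon$, which yields $u_\epsilon(\bar x)-u_\epsilon(\bar y)\le[\varphi]_{C^{0,1}}|\bar x-\bar y|+\epsilon$. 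Comparing with $L\phi(|\bar x-\bar y|)\ge \tfrac{L}{2}|\bar x-\bar y|$ produces a contradiction only when $|\bar x-\bar y|\gtrsim\epsilon/L$. When $|\bar x-\bar y|\lesssim\epsilon/L$ --- a configuration you cannot exclude, and precisely the scale at which $\beta_\epsilon\sim\epsilon^{-1}$ is active --- the additive error $\epsilon$ swamps the barrier gain and the argument does not close. Taking ``$\epsilon$ small'' does not help, because the estimate must be uniform in $\epsilon$ and the bad scale shrinks together with $\epsilon$. Note also that $\varphi$ is merely Lipschitz, so it cannot literally serve as a viscosity test function at a touching point; only the two pointwise inequalities above are available from the obstacle constraint.

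The paper closes exactly this regime by a mechanism your proposal is missing: for $x_0$ with $u_\epsilon(x_0)-\varphi(x_0)\le\epsilon$ it sets $v(z)=\epsilon^{-1}[u_\epsilon(x_0+\epsilon z)-u_\epsilon(x_0)]$, which solves the \emph{uniformly elliptic} equation $F_\epsilon(D^2v)=\beta(v-\tilde\varphi)$ with right-hand side bounded by $\|\beta\|_\infty$ (not by $\epsilon^{-1}\|\beta\|_\infty$); then $|Du_\epsilon(x_0)|=|Dv(0)|\le C\|v\|_{L^\infty(B_{3/2})}$ by interior Lipschitz estimates, and $\|v\|_{L^\infty}$ is controlled by combining the one-sided bound $v\ge-2([\varphi]_{C^{0,1}}+1)$, coming from the obstacle, with the Harnack inequality. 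This gives a uniform gradient bound $K$ on the strip; only then does $\{|Du_\epsilon|>K\}$ lie inside $\{u_\epsilon-\varphi>\epsilon\}$, where $F(D^2u_\epsilon)=0$, so that $u_\epsilon$ is a $K$-grad viscosity solution and Theorem \ref{LipTHM} applies. That last step agrees with your plan; to repair your proof you must insert the $\epsilon$-blow-up (or an equivalent scale-$\epsilon$ analysis) to handle the strip before, or instead of, running the doubling argument there.
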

\begin{proof}
The key feature of the model is its distinct behavior within the regions
$$
    \Omega_1 := \Omega' \cap \{ u_\epsilon - \varphi\leq \epsilon \} \quad \mbox{and} \quad \Omega_2 := \Omega' \cap \{ u_\epsilon - \varphi > \epsilon \}.
$$
By means of a standard covering argument, we can restrict the analysis to the case $\Omega = B_1$ and $\Omega' = B_{1/2}$. 
\medskip
 
\noindent {\bf Case I:} Let $x_0 \in \Omega_1$ be fixed. We will prove the existence of a constant $C_1'>0$ that does not depend on $\epsilon>0$ such that
$$
    |Du_\epsilon (x_0)| \le C_1'\left (n,\lambda,\Lambda,\beta,[\varphi]_{C^{0,1}} \right ).
$$
For that, define the auxiliary function, $v \colon B_2 \to \mathbb{R}$, as:
$$
    v(z) \coloneqq \epsilon^{-1}[u_\epsilon(x_0 + \epsilon z) - u_\epsilon(x_0)].
$$
Direct calculations show that $v$ solves
$$
    F_\epsilon(D^2v) = \beta(v-\tilde{\varphi}),
$$
in $B_2$, where $\tilde{\varphi}(z) \coloneqq \epsilon^{-1}(\varphi(x_0 + \epsilon z) - u_\epsilon(x_0))$ and $ F_\epsilon(M) \coloneqq \epsilon F(\epsilon^{-1}M)$. Note that the equation for $v$ is uniformly elliptic and therefore Lipschitz estimates are available. In particular we can estimate
$$
    |Du_\epsilon(x_0)| = |Dv(0)|\le C \|v\|_{L^\infty(B_{3/2})},
$$
for a constant $C>0$ depending only on $n$, $\lambda$, $\Lambda$ and $\|\beta\|_\infty$. 

Now we turn to get uniform (in the parameter $\epsilon$) estimates for $\|v\|_{L^\infty(B_{3/2})}$. Recall that since $u_\epsilon \geq \varphi$ and $u_\epsilon(x_0) - \varphi(x_0) \leq \epsilon$, we get for $z \in B_2$,
$$
    \begin{array}{lll}
       v(z)  & = & \epsilon^{-1}[u_\epsilon(x_0 + \epsilon z) - u_\epsilon(x_0)]\\
       & & \\
         &\geq & \epsilon^{-1}[\varphi(x_0 + \epsilon z) - \varphi(x_0) - \epsilon]\\
         & & \\
         & \geq & - \epsilon^{-1}|\varphi(x_0 + \epsilon z) - \varphi(x_0)| - 1\\
         & & \\
         & \geq & -[\varphi]_{\mathcal{C}^{0,1}}|z| - 1\\
         & & \\
         & \geq & -2\left( [\varphi]_{\mathcal{C}^{0,1}} + 1 \right) = -\kappa
    \end{array}
$$
Harnack inequality applied to the non-negative function $w:= v +\kappa \geq 0$ yields
$$
\begin{array}{lll}
    \displaystyle \sup_{B_{3/2}}w  & \leq & \displaystyle C\left(w(0) + \|\beta\|_{L^\infty(\mathbb{R})}\right)\\[0.5cm]
     & \leq & \displaystyle C\left(\kappa + \|\beta\|_{L^\infty(\mathbb{R})}\right).
\end{array}
$$
Combining all such estimates we finally end up with
$$
    \|Du_\epsilon\|_{L^\infty(\Omega_1)} \leq K.
$$
for $K$ depending on $n$, $\lambda$, $\Lambda$, $\|\beta\|_\infty$ and $[\varphi]_{C^{0,1}}$.

\medskip
\noindent {\bf Case II:} The estimate for $x_0 \in \Omega_2$.

We simply note that, in view of the estimate obtained in Case I, $u_\epsilon$ satisfies 
$$
    F(D^2 u_\epsilon) = 0 \quad \text{ in } \{ |Du_\epsilon| > K\}.
$$
Theorem \ref{LipTHM} then gives the desired local Lipschitz estimate, independently of the parameter $\epsilon>0$.
\end{proof}

\textbf{Acknowledgments.} DJA is partially supported by CNPq 311138/2019-5 and grant 2019/0014 Paraíba State Research Foundation (FAPESQ). This study was financed in part by the Coordenação de Aperfeiçoamento de Pessoal de Nível Superior - Brasil (CAPES) - Finance Code 001.

\end{document}